\documentclass[11pt]{amsart}
\usepackage{amsmath, amssymb}
\ifx\pdfoutput\undefined 
\usepackage{graphicx}
\else
\usepackage[pdftex]{graphicx}
\usepackage{epstopdf}
\fi
\usepackage{verbatim, manfnt, amscd}
\usepackage{tikz}

\setlength{\textwidth}{125mm}
\setlength{\textheight}{195mm}

\newtheorem{theorem}{Theorem}[section]
\newtheorem{proposition}[theorem]{Proposition}
\newtheorem{lemma}[theorem]{Lemma}
\newtheorem{corollary}[theorem]{Corollary}

\newtheorem*{CGC}{Cheeger-Gromov $C^k$ Convergence Theorem}

\theoremstyle{definition}

\theoremstyle{remark}
\newtheorem{remark}[theorem]{Remark}

\numberwithin{equation}{section}


\begin{document}

\title{On Isospectral compactness in conformal class for 4-manifolds}

\author{Xianfu LIU}\address{School of Mathematical Sciences, University of Science and Technology of China, Hefei, Anhui, P. R. China}\email{lxfpqa@mail.ustc.edu.cn}
\author{Zuoqin WANG}\address{School of Mathematical Sciences, University of Science and Technology of China, Hefei, Anhui, P. R. China}\email{wangzuoq@ustc.edu.cn}

\thanks{The authors are supported in part by  NSF in China 11571331, NSF in China 11526212 and ``the Fundamental Research Funds for the Central Universities".}

\begin{abstract}
Let $(M, g_0)$ be a closed  4-manifold with positive Yamabe invariant and with $L^2$-small Weyl curvature tensor. Let $g_1 \in [g_0]$ be any metric in the conformal class of $g_0$ whose scalar curvature is $L^2$-close to a constant. We prove that the set of Riemannian metrics in the conformal class $[g_0]$ that are isospectral to $g_1$ is compact in the $C^\infty$ topology. 
\end{abstract}

\subjclass[2010]{35P05, 58J53, 58C40}
\maketitle

\section{Introduction}

Let $M$ be a compact smooth manifold without boundary and let $g$ be a smooth Riemannian metric on $M$. We will  denote by $\Delta_g$ the Laplace-Beltrami operator associated to $g$. It is well known that the eigenvalues of $\Delta_g$ form a discrete sequence that tends to infinity:
\[
\mathrm{Spec}(\Delta_g): 0 =\lambda_0 < \lambda_1 \le \lambda_2 \le \lambda_3 \le \cdots \quad \to \infty.
\] 
Although one can't compute individual eigenvalues explicitly in most cases, it has long been known that the sequence $\mathrm{Spec}(\Delta_g)$ is quite rigid, at least in the $k \to \infty$ limit. For example, the Weyl's law states that the leading asymptotic of $\lambda_k$'s is completely determined by the dimension and the volume of $(M, g)$. A very interesting question is to study the \emph{exact} relation between the geometry of $(M, g)$ and $\mathrm{Spec}(\Delta_g)$. This turns out to be a very subtle question: lots of theorems (both in positive and negative directions) have been proved, while many major  conjectures are still widely open. For some of the conjectures and their current status, we refer to \cite{Yau}, \cite{Zel}  and references therein.   
 
Two Riemannian metrics $g$ and $g'$ on $M$ are said to be isospectral if $\mathrm{Spec}(\Delta_g)=\mathrm{Spec}(\Delta_{g'})$. People had found plenty of examples of isospectral pairs (e.g. \cite{Mil}, \cite{Sun}), or even families (e.g. \cite{Gor}, \cite{BrG}), of Riemannian metrics. However, it is still believed that the set of isospectral metrics on any smooth manifold is not too large. The famous isospectral compactness problem asks: Given any compact smooth manifold $M$, for any Riemannian metric $g$, is the set of Riemannian metrics on $M$ that is isospectral to $g$ compact in the $C^\infty$ topology? In other words, does any sequence of isospectral metrics admits a convergent subsequence in the $C^\infty$ topology?

One of the first remarkable works on the isospectral compactness problem was done by  B. Osgood, R. Phillips and P. Sarnak \cite{OPS}: they proved the compactness of isospectral metrics on any given compact Riemann surface. For manifolds of dimension greater than two, it is still not known whether the isospectral sets of metrics on a given manifold are compact or not. However, if one restricted to the isospectral metrics in the same conformal class, then it was proved by A. Chang and P. Yang (\cite{CY1},  \cite{CY2}) that for three dimensional compact manifolds, the isospectral metrics in the same conformal class is compact. People also studied isospectral compactness  under other extra assumptions, see e.g. \cite{And}, \cite{BPY},\cite{BPP}, \cite{Gur} and \cite{Zhou}. We remark that even inside the same conformal class, one can find isospectral families of non-isometric Riemannian metrics (\cite{BrG}). 

Before we continue to describe the isospectral compactness results for 4 dimensional manifolds, we would like to say a few words about the ideas of \cite{CY1} for 3-dimensional manifolds. Recall that any Riemannian metric in the conformal class $[g_0]$ of $g_0$ is of the form $g=u^2 g_0$. So to prove the compactness of isospectral Riemannian metrics, one need to prove  the isospectral compactness of the corresponding conformal factors. In other words, suppose $u_j \in C^\infty(M)$ be a sequence of conformal factors so that $g_j=u_j^2g_0$ are isospectral, one need to prove that the sequence of functions $\{u_j\}$ admits a convergent subsequence. [This is not quite precise, since one has to modulo the effect of isometries. See the remark after theorem \ref{mainthm} below.] In their proof of the isospectral compactness in conformal class for 3-manifolds, A. Chang and P. Yang introduced the following ``non-blowup" condition for the sequence of the conformal factors $u_j$'s: 
\begin{equation}\label{star}
\aligned
\mbox{\ There exist positive constants \ }  r_0, l_0 & \mbox{\ so that for all \ }j, \qquad \\
 \mathrm{Vol}\{x   |  u_j(x) \ge r_0\}  \ge l_0   \mathrm{Vol}&(M, g_0). 
\endaligned
\end{equation} 
Their proof are then divided into two parts, the easier part being proving the isospectral compactness for sequences  $u_j$'s satisfying (\ref{star}), 
 while the harder part is to verify that the condition (\ref{star}) is always true under the isospectral assumption: They showed that if  (\ref{star}) fails, then  $(M, g_0)$ is conformal to the standard $(S^3, g_0)$.

The isospectral compactness in conformal class problem for 4 dimensional manifolds was analyzed in \cite{Xu1}, \cite{Xu2} and \cite{ChX}. For example, in \cite{ChX} they proved the isospectral compactness in conformal class under the extra conditions 
\[
\int_M S_{g_1} dv_{g_1}<\frac 6{C_s} \left(\int_M dv_{g_1}\right)^{1/2}
\]
and
\[
\int_MS_{g_1}^2dv_{g_1}-\frac{(\int_MS_{g_1}dv_{g_1})^2}{\int_Mdv_{g_1}}\leq\frac{11^2}{52^2C_s^2},  
\]  
where $C_s$ is the Sobolev constant in (\ref{SI}) below. Then showed that   the first inequality implies that the conformal factors $u_j$'s satisfy the condition (\ref{star}). Moreover, from these two inequality they proved that  $\int |R_g|^4dv$ is bounded, which implies that the conformal factors are bounded from below and above uniformly. Note that as Chen and Xu remarked, their arguments also works for positive scalar curvature case.

One of the main tools in studying isospectral compactness problem is the Sobolev inequality of Aubin (c.f. \cite{Aub}). When restricted to 4-dimensional Riemannian manifold $(M^4,g_0)$, the inequality takes the form
\begin{equation}\label{SI}
\left(\int_{M^4}|f|^4dv_0\right)^{{1}/{2}}\leq C_s\int_{M^4}|\nabla f|^2dv_0+K_s\int_{M^4}f^2dv_0
\end{equation}
for any function $f\in H_1^2(M^4)$, where $C_s$ and $K_s$ are positive constants.
In what follows we will always make the following assumption:
\[
\text{\bf Assumption: } g_0 \text{ is a metric with constant scalar curvature }  S_0>0. 
\] 
Let $C_s$ and $K_s$ be the Sobolev constants as in (\ref{SI}) for the metric $g_0$. We define a constant (which depends only on $g_0$) 
\begin{equation}\label{C0}
C_0 :=\max(C_s, \frac{6K_s}{S_0}). 
\end{equation}
Note that according to \cite{AuL} (see also \cite{Bec}), one has $C_0 \ge \frac{\sqrt 6}{8 \pi}$. 

In this paper, we will study isospectral compactness in conformal class for 4-manifolds with positive Yamabe invariant. Instead of bound the scalar curvature (as in the first condition of \cite{ChX} cited above), we will assume that the Weyl curvature tensor has a small $L^2$-norm. Since the norm of the Weyl curvature tensor is a conformal invariant,   we only have one restriction (see the condition (\ref{SCB}) below) on the scalar curvature. Note that the condition (\ref{SCB}) is in fact a condition on the spectrum.  

Our main theorem in this paper is 
\begin{theorem}\label{mainthm}
Let $(M,g_0)$ be a compact Riemannian 4-manifold with positive Yamabe invariant. Suppose the Weyl curvature of $(M, g_0)$ satisfies 
\begin{equation}\label{WB}
\int|W|_{g_0}^2dv_{0}\leq\frac{1}{625C_0^2} ,
\end{equation}
where $C_0$ is as in (\ref{C0}).
Suppose $g_1\in [g_0]$ be any metric in the same conformal class of $g_0$ satisfying 
\begin{equation}\label{SCB}
\int_MS_{g_1}^2dv_{g_1}-\frac{(\int_MS_{g_1}dv_{g_1})^2}{\int_Mdv_{g_1}}\leq\frac{1}{64C_0^2}.    
\end{equation}
Then the set of Riemannian metrics $g$ in $[g_0]$ which are isospectral to $g_1$ is compact in the $C^{\infty}$-topology.  
\end{theorem}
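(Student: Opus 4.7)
Since the Yamabe invariant is positive, by the resolution of the Yamabe problem we may take $g_0$ to be the Yamabe representative with constant positive scalar curvature $S_0>0$ as in the standing Assumption. Writing any $g\in[g_0]$ as $g=u^2g_0$ with $u>0$ smooth, the conformal change of scalar curvature gives the Yamabe equation
\[
-6\Delta_{g_0}u+S_0 u=S_g u^3.
\]
If $g_j=u_j^2g_0$ is a sequence isospectral to $g_1$, the task is to extract a subsequence of $\{u_j\}$ converging in $C^\infty$ (modulo pullback by isometries, as the authors point out after the statement).

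The first step is to unpack isospectrality via the heat coefficients $a_0,a_1,a_2$. From $a_0$ and $a_1$ I immediately obtain $\mathrm{Vol}(M,g_j)=\mathrm{Vol}(M,g_1)$ and $\int_M S_{g_j}\,dv_{g_j}=\int_M S_{g_1}\,dv_{g_1}$. In dimension four, $a_2$ is a universal linear combination of $\int|W|^2dv$, $\int|E|^2dv$ and $\int S^2 dv$, where $E$ denotes the traceless Ricci. Combining this with the Chern--Gauss--Bonnet formula (another independent linear combination involving the topological constant $\chi(M)$) and the pointwise conformal invariance of $|W|^2dv$ in dimension four, I deduce that $\int S_{g_j}^2 dv_{g_j}$ is itself a spectral invariant. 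Consequently the hypothesis \eqref{SCB} is automatic along the isospectral family, and \eqref{WB} persists by conformal invariance.

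The heart of the proof is to verify the Chang--Yang non-blowup condition \eqref{star} for $\{u_j\}$. Testing the Yamabe equation against $u_j$ gives
\[
6\int|\nabla u_j|_{g_0}^2\,dv_0 + S_0\int u_j^2\,dv_0=\int S_{g_j}\,dv_{g_j},
\]
and feeding this into \eqref{SI} with the constant $C_0$ of \eqref{C0} produces a Yamabe-type bound $\sqrt{\mathrm{Vol}(g_j)}\le \frac{C_0}{6}\int S_{g_j}\,dv_{g_j}$, so the total scalar curvature is bounded away from zero uniformly in $j$. A concentration-compactness argument in the spirit of Chang--Yang \cite{CY1}, adapted to dimension four as in \cite{ChX}, then rules out collapse of the sub-level sets of $u_j$. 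I expect this step to be the main obstacle: the two smallness parameters $1/(625C_0^2)$ and $1/(64C_0^2)$ in \eqref{WB} and \eqref{SCB} must be combined quantitatively with \eqref{SI} so that the Sobolev absorption of the critical nonlinear term leaves the correct sign; the precise arithmetic of these constants is what keeps ``bubbling'' of $u_j$ off a thin set from being compatible with the fixed volume, fixed mean scalar curvature, and bounded variance of $S_{g_j}$.

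With \eqref{star} secured, the remainder follows a by-now standard program. A Moser iteration on the Yamabe equation, using \eqref{SI} together with the smallness of $\int|W|^2dv$ and of the variance of $S_{g_j}$ to absorb the nonlinear term $S_{g_j}u_j^3$, produces a uniform pointwise upper bound on $u_j$; a Harnack-type argument (again exploiting \eqref{star}) provides a uniform positive lower bound. Bootstrapping the elliptic equation for $u_j$ with $L^p$-bounds on $S_{g_j}$ obtained from the higher heat invariants $a_k$ ($k\ge 3$) then yields uniform $C^k$-estimates for every $k$, and Arzel\`a--Ascoli delivers the desired $C^\infty$-convergent subsequence.
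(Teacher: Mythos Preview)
Your proposal has a genuine gap at precisely the step you flag as ``the main obstacle.'' You assert that a concentration--compactness argument \emph{rules out} the failure of the non-blowup condition \eqref{star}. This is not what Chang--Yang do, and it is not what the present paper does either. In fact \eqref{star} \emph{can} fail: concentration is not forbidden by the smallness hypotheses \eqref{WB} and \eqref{SCB} alone. The paper's strategy is a dichotomy. If \eqref{star} holds, one proceeds essentially as you describe (uniform upper and lower bounds on $u_j$ via Proposition~\ref{upperlower}, then Cheeger--Gromov compactness; the $L^4$ curvature bound needed here comes from the heat coefficient $a_3$ via Proposition~\ref{NER}). If \eqref{star} fails, one does not reach a contradiction directly. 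Instead one shows (Propositions~\ref{newpropB}--\ref{ReplacePD}) that the mass of $u_j$ concentrates at a single point $x_0$ and that a rescaled subsequence $C_ju_j$ converges on $M\setminus\{x_0\}$ to the Green's function of the conformal Laplacian. The conformal sphere theorem (Theorem~\ref{mainthm2}) then forces $(M,g_0)$ to be conformally equivalent to the round $S^4$: the blow-up metric $\omega^2g_0$ is flat on the complement of a point, Wolf's classification of complete flat manifolds gives $\chi(M)>0$, and the Chang--Gursky--Yang sphere theorem (together with an argument excluding $\mathbb{RP}^4$) finishes the identification. Finally, on $S^4$ one invokes the conformal group to renormalize the $u_j$ so that \eqref{star} \emph{does} hold (Corollary~\ref{S4St}), reducing to the first case.

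Your Sobolev computation $\sqrt{a_0}\le \tfrac{C_0}{6}\int S_{g_j}\,dv_{g_j}$ is correct but does not by itself obstruct bubbling; a sequence with fixed volume and fixed $\int S_g\,dv_g$ can still concentrate all its $L^4$ mass near a point while $u_j\to 0$ in $L^p$ for $p<4$ (this is exactly Lemma~\ref{lem4.2}). The smallness constants in \eqref{WB} and \eqref{SCB} are used to close the estimates in Lemmas~\ref{lem3.2}--\ref{lem3.4} and in the iteration of Lemma~\ref{Lem4.1}, not to prevent concentration. So the missing ingredient in your outline is the entire Section~5--6 machinery culminating in Theorem~\ref{mainthm2}; without it the argument is incomplete for manifolds other than $S^4$.
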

\begin{remark}
For the case $M=S^4$ with $g_0=g_{can}$ the canonical round metric, the compactness is in the following sense: For any sequence $g_j$ in $[g_0]$ that are isospectral to each other, there is a choice of conformal factors $\{u_j\}$'s so that each $g_j$ is isometric to $u_j^2 g_0$, and $\{u_j\}$'s has a convergent subsequence.  
\end{remark}

We shall say a few words of the proof.  As in \cite{CY1}, we will  first prove   theorem \ref{mainthm}  under the extra assumption (\ref{star}). The argument is closely related to that of \cite{Xu1} and \cite{ChX}, i.e. we first deduce that such $u_j$'s are uniformly bounded both from below and from above.  As noticed by \cite{CY2} and \cite{Xu2},  modulo isometries  the conformal factors on the standard $S^4$ can be chosen to be satisfying (\ref{star}).  So in particular  this implies that theorem \ref{mainthm} is true for $S^4$. This will be done in section 4. For the rest of this paper, we will show that (\ref{star}) holds under the condition of theorem \ref{mainthm}. Motivated by \cite{CY1}, we will prove  the following ``conformal sphere theorem": 

\begin{theorem}\label{mainthm2}
Let $(M, g_0)$ be a 4-dimensional closed Riemannian manifold with positive Yamabe invariant and satisfies
\begin{equation}
\int_M |W|^2 dv_0 < 16\pi^2 . 
\end{equation}  
Let $\{u_j\}$ be a sequence of positive smooth functions $M$ so that 
\begin{enumerate}
\item The integral $\int_M|R|^4dv_{g}$ is bounded for the sequence $g_j=u_j^2g_0$, 
\item There exist $x_0 \in M$ and a sequence of constants $C_j>0$ with $C_j\to\infty$ so that the sequence
$\{C_j u_j\}$
 converges uniformly on compact subset of $M\setminus\{x_0\}$ to the Green's function of the conformal Laplacian $L$. 
\end{enumerate}
Then $(M, g_0)$ is conformally equivalent to $(S^4, g_{can})$. 
\end{theorem}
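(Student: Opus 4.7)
\emph{Proof proposal.} The plan is to use the Green's function limit to construct a complete asymptotically flat (AF), scalar-flat metric on $M \setminus \{x_0\}$, and then to apply a rigidity argument of positive-mass type. The key point is that if that AF manifold is isometric to flat $\bbR^4$, then via the inverse of stereographic projection $(M, g_0)$ must be conformally equivalent to $(S^4, g_{can})$.

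First I would set $\hat g := G^2 g_0$ on $M \setminus \{x_0\}$, where $G$ is the Green's function of the conformal Laplacian $L = -6\Delta_{g_0} + S_{g_0}$ with pole at $x_0$. Because $LG = 0$ away from $x_0$, the conformal transformation rule for scalar curvature in dimension $4$ yields $S_{\hat g} \equiv 0$. In conformal normal coordinates centred at $x_0$, the standard expansion $G(x) = r^{-2} + A + O(r)$ (with $r = d_{g_0}(\cdot\, , x_0)$ and $A$ a real constant), together with the inversion $y = x/|x|^2$, identifies $(M \setminus \{x_0\}, \hat g)$ with a complete AF $4$-manifold having a single end, and exhibits $A$ as its ADM mass.

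Next I would transfer the two curvature hypotheses to $\hat g$. By the conformal invariance of $|W|^2\, dv$ in dimension $4$,
\[
\int_{M \setminus \{x_0\}} |W_{\hat g}|^2\, dv_{\hat g} \;=\; \int_M |W_{g_0}|^2\, dv_{g_0} \;<\; 16\pi^2.
\]
The rescaled metrics $\hat g_j := (C_j u_j)^2 g_0 = C_j^2 g_j$ are homothetic to $g_j$ and converge locally to $\hat g$ on $M \setminus \{x_0\}$ by hypothesis; combining $C_j \to \infty$ with the uniform bound on $\int |R|^4\, dv_{g_j}$ produces local curvature control for $\hat g$ on every compact subset of $M \setminus \{x_0\}$ and rules out energy concentration away from $x_0$. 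Thus $(M \setminus \{x_0\}, \hat g)$ is a complete scalar-flat AF $4$-manifold with locally bounded curvature and a globally small $L^2$ Weyl tensor.

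Finally I would argue rigidity. Apply the Chern--Gauss--Bonnet identity on a geodesic-ball exhaustion of $(M \setminus \{x_0\}, \hat g)$: since $S_{\hat g} = 0$, the interior integrand reduces to $\tfrac12|W_{\hat g}|^2 - \tfrac12|E_{\hat g}|^2$ (with $E$ the traceless Ricci), while the boundary term at infinity is governed by the mass $A$. Reconciling this with the Chern--Gauss--Bonnet identity for the closed $(M, g_0)$, and using the strict inequality $\int |W|^2 < 16\pi^2$ in concert with the $4$-dimensional positive mass theorem for scalar-flat AF manifolds, should force both $A = 0$ and $E_{\hat g} \equiv 0$, hence $\hat g$ is flat. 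The positive mass rigidity then identifies $(M \setminus \{x_0\}, \hat g)$ with flat Euclidean space, whence $(M, g_0)$ is conformally $(S^4, g_{can})$. The main obstacle I anticipate is this last step, i.e.\ extracting $A = 0$: the delicate points are the bookkeeping of the boundary term in the Gauss--Bonnet exhaustion at infinity, its reconciliation with the closed-manifold identity on $(M, g_0)$, and a sufficiently sharp use of the strict Weyl bound, so that the positive mass theorem can be invoked to upgrade a non-negative mass to a vanishing one.
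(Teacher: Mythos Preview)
Your outline diverges from the paper at the first substantive step, and the divergence costs you the key simplification. You correctly set $\hat g = G^2 g_0$ and note that it is scalar-flat, but from hypothesis (1) together with $C_j \to \infty$ you extract only ``local curvature control.'' The paper extracts far more: since $\hat g_j := (C_j u_j)^2 g_0 = C_j^2 g_j$ is homothetic to $g_j$, one has $\int_M |R_{\hat g_j}|^4\, dv_{\hat g_j} = C_j^{-4} \int_M |R_{g_j}|^4\, dv_{g_j} \to 0$, and Fatou's lemma on any compact $K \subset M \setminus \{x_0\}$ then gives $\int_K |R_{\hat g}|^4\, dv_{\hat g} = 0$. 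Hence $\hat g$ is \emph{flat}, not merely scalar-flat, and the positive-mass machinery is never needed. From flatness the paper proceeds topologically rather than analytically: a Mayer--Vietoris computation combined with Wolf's structure theory for complete flat manifolds forces $\chi(M) > 0$; the Weyl hypothesis $\int_M |W|^2\, dv_0 < 16\pi^2$ is used \emph{only here}, to place $(M, g_0)$ in the scope of the Chang--Gursky--Yang conformal sphere theorem (which requires $\int |W|^2 < 16\pi^2 \chi(M)$), giving $M \cong S^4$ or $\mathbb{RP}^4$; a $\pi_1$ argument via Wolf excludes $\mathbb{RP}^4$; and Wolf's classification then identifies the flat $(M \setminus \{x_0\}, \hat g)$ with Euclidean $\bbR^4$, whence Liouville's theorem finishes.

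Your alternative route has a genuine gap precisely where you flag it. The Chern--Gauss--Bonnet boundary integral on large spheres in an asymptotically flat $4$-manifold converges to the universal constant $8\pi^2$ (as one checks on flat $\bbR^4$, where the interior integrand vanishes and $\chi = 1$); the mass is a subleading coefficient in the metric expansion and does not survive in that limit. Thus Gauss--Bonnet yields no inequality on $A$, and nothing in your outline forces $A \le 0$ to pair with the positive mass theorem. In the paper the strict Weyl bound plays no role in any mass identity; it enters solely through the Chang--Gursky--Yang sphere theorem once $\chi(M) > 0$ has been secured from the flatness of $\hat g$.
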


The main ingredients in proving theorem \ref{mainthm2} are the conformal sphere theorem of \cite{CGY}, and the classification of complete connected flat manifolds in \cite{Wolf}. We remark that the conformal sphere theroem of \cite{CGY} assumes 
\[
\int_M |W|^2 dv_0 < 16 \pi^2 \chi(M),
\]
which requires $\chi(M)>0$. There are plenty of closed manifolds with positive Yamabe invariant and non-positive Euler characteristic. For a generalized sphere theorem, c.f. \cite{ChZ}. We will prove theorem \ref{mainthm2} in the second half of section 6. In section 5 and the first half of section 6 we will show that under the conditions of theorem \ref{mainthm}, if the non-blowup condition (\ref{star}) fails, then the condition in theorem \ref{mainthm2} must hold. So the proof of the main theorem is completed.    



\section{Preliminaries}
 
\subsection{Heat invariants}

One of the main tools used in studying the isospectral compactness problem is the heat trace expansion. It is well known that as $t \to 0$, 
\[
\mathrm{Tr}(e^{-t\Delta}) = \sum_i e^{-t\lambda_i} \sim {(4\pi t)^{-\frac{n}{2}}}(a_0+a_1t+a_2t^2+a_3t^3+\cdots),
\]
where $a_0, a_1, a_2, \cdots$ are integrals of derivatives of curvature terms on $M$. For Riemannian manifolds of dimension 4, the first several heat invariants are explicitly given by 
\begin{subequations}\label{heat}
\begin{align}
a_0 &= \int_Mdv_g=\mathrm{Vol}(M), \label{heata0}\\
a_1 &= \frac{1}{6}\int_MS_gdv_g, \label{heata1} \\
a_2 &= \frac{1}{180}\int_M \bigg(|W|^2+|B|^2+\frac{29}{12}S_g^2\bigg)dv_g, \label{heata2}
\end{align}
\end{subequations} 
and (c.f. \cite{Xu2}, \cite{Sak}, \cite{Gi1}, \cite{Gi2})
\begin{equation}\label{a3}
\aligned
a_3=\frac{1}{7!}\int_M\bigg(&-\frac{7}{3}|\nabla W|^2-\frac{4}{3}|\nabla B|^2-\frac{152}{9}|\nabla S_g|^2\\
&+ 5S_g|W|^2+\frac{50}{9}S_g|B|^2+\frac{185}{54}S_g^3\\
&+ \frac{38}{9}W^{ijkl}W_{kl}^{\ \ mn}W_{mnij}+12B^{ij}W_{i}^{\ klm}W_{jklm}\\
&+ \frac{40}{3}B^{ij}B^{kl}W_{ikjl}+\frac{56}{9}W^{ijkl}W_{i\ k}^{m\ n}W_{jmln}\bigg)dv_g,
\endaligned
\end{equation}
where $S_g, Ric, B, W, R$ and $dv_g$ are the scalar curvature, the Ricci curvature tensor, the traceless Ricci curvature tensor, the Weyl curvature tensor, the full Riemannian curvature tensor, and the volume element associated to the given metric $g$ respectively. For 4-manifolds, $R$, $W$, $B$ and $S$ are related by 
\[
R_g=W_g+\frac 12 B_g \textcircled{$\wedge$} g + \frac S{24}  g  \textcircled{$\wedge$} g, 
\]
where $\textcircled{$\wedge$}$ is the Kulkarni-Nomizu product. 
In particular, 
\begin{equation}\label{RWBS}
|R_g|^2 = |W_g|^2+2 |B_g|^2 + \frac 16  S_g^2.
\end{equation}

\subsection{Conformal change of metric}

Let $M$ be a 4-manifold. Then under the conformal change $g=u^2g_0$,  the volume forms of the metrics $g$ and $g_0$ are related by
\[
dv_g = u^4 dv_0,
\]
while the corresponding scalar curvatures  are related by the equation 
\begin{equation}\label{ConfSc}
6\Delta_{g_0} u+S_gu^3=S_0u.
\end{equation}
Another very important fact for us is that the integral 
\[\int_M |W_g|^2dv_g\] 
is invariant under the conformal change.

We also notice that on 4-manifolds, the quantity 
\[
\int_M S_g^2 dv_g
\]
is a spectral invariant if we assume that the metrics sit  in the same conformal class. For a proof, c.f. \cite{ChX}.

\section{Some norm estimates}

We recall that $|W|^2=W^{ijkl}W_{ijkl}$. 

\begin{lemma} \label{lem3.2}
Under the assumption (\ref{WB}), we have
\begin{subequations}\label{W3R2WRW2}
\begin{align}
& \left|\int_M W^{ijkl}W_{kl}^{\ \ mn}W_{mnij}dv_g \right| \leq \frac{1}{25C_0}\left(\int_M|W_g|_g^4dv_g\right)^{{1}/{2}}, \label{W3R2WRW2a}\\
& \left|\int_M B^{ij}B^{kl}W_{ikjl} dv_g\right|\leq\frac{1}{25C_0}\left(\int_M|B_g|_g^4dv_g\right)^{{1}/{2}}, \label{W3R2WRW2b} 
\end{align}
\end{subequations}
and
\begin{equation*}
\tag{3.1c} 
\left|\!\int_M\!\!B^{ij}W_{i}^{\ klm}W_{jklm}dv_g\right|\!\leq\!\frac{1}{50C_0}\!\!\left[\eta\!\left(\!\int_M\!|B_g|_g^4dv_g\!\right)^{{1}/{2}}\!
+\!\!\frac{1}{\eta}\!\left(\!\int_M\!|W_g|_g^4dv_g\!\right)^{{1}/{2}}\!\right],\label{W3R2WRW2c}
\end{equation*}
where $\eta$ is any positive constant.
\end{lemma}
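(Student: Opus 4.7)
The plan is to reduce each of the three inequalities to the same basic input, namely a pointwise tensor-algebraic bound combined with the conformal invariance of $\int_M|W|_g^2\,dv_g$, and then to apply Cauchy--Schwarz (plus Young's inequality in part (c)) to get the desired integral form.

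First I would establish the pointwise inequalities
\[
|W^{ijkl}W_{kl}^{\ \ mn}W_{mnij}|\le |W_g|_g^3,\quad |B^{ij}B^{kl}W_{ikjl}|\le |B_g|_g^2|W_g|_g,\quad |B^{ij}W_i^{\ klm}W_{jklm}|\le |B_g|_g\,|W_g|_g^2,
\]
which follow from iterated Cauchy--Schwarz on contracted indices (viewing each contraction as an inner product on the appropriate tensor factor). The constants come out to $1$ once one uses the Hilbert--Schmidt/Frobenius norm that the statement uses.

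Next, for part (a) I would apply the standard Cauchy--Schwarz inequality
\[
\int_M |W_g|_g^3\,dv_g \le \Bigl(\int_M |W_g|_g^2\,dv_g\Bigr)^{1/2}\Bigl(\int_M |W_g|_g^4\,dv_g\Bigr)^{1/2},
\]
and then invoke the conformal invariance $\int_M|W_g|_g^2\,dv_g=\int_M|W_{g_0}|_{g_0}^2\,dv_0$ (recalled in Section 2.2) together with the hypothesis (\ref{WB}) to conclude $(\int|W_g|_g^2\,dv_g)^{1/2}\le\tfrac{1}{25 C_0}$. Part (b) goes the same way: Cauchy--Schwarz on $\int |B_g|_g^2|W_g|_g\,dv_g$ pairing $|W_g|$ with the constant $1$ in one factor and $|B_g|^2$ in the other gives the bound in terms of $(\int|W|^2)^{1/2}(\int|B|^4)^{1/2}$, and (\ref{WB}) finishes it.

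Part (c) needs one extra trick. I would write
\[
\int_M |B_g|_g|W_g|_g^2\,dv_g
\le \Bigl(\int_M|W_g|_g^2\,dv_g\Bigr)^{1/2}\Bigl(\int_M |B_g|_g^2|W_g|_g^2\,dv_g\Bigr)^{1/2},
\]
then apply Cauchy--Schwarz once more to the second factor to bring in $(\int|B|^4)^{1/4}(\int|W|^4)^{1/4}$, and finally use Young's inequality $2ab\le \eta a^2+\eta^{-1}b^2$ to split this product into the $\eta$-weighted sum of $(\int|B|^4)^{1/2}$ and $(\int|W|^4)^{1/2}$. Bounding the surviving $(\int|W|^2)^{1/2}$ factor by $\tfrac{1}{25 C_0}$ and collecting the constants yields the prefactor $\tfrac{1}{50 C_0}$. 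The only delicate point to watch is the constant in the pointwise tensor-norm bounds, which is the main place one might lose a factor; everything else is a routine chain of Hölder and Young inequalities driven entirely by (\ref{WB}) and the conformal invariance of the $L^2$-norm of the Weyl tensor.
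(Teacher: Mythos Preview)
Your proposal is correct and follows essentially the same route as the paper: pointwise tensor bounds, then Cauchy--Schwarz to split off a factor of $(\int|W|^2)^{1/2}$ (controlled by (\ref{WB}) via conformal invariance), and for (3.1c) an additional Cauchy--Schwarz on the mixed factor followed by Young's inequality $ab\le\tfrac12(\eta a^2+\eta^{-1}b^2)$. The paper leaves the pointwise inequalities implicit, so your write-up is in fact slightly more explicit than the original.
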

\begin{proof}
(\ref{W3R2WRW2a}) follows from 
\[
\int_M|W_g|_g^3dv_g  \leq  \left(\int_M|W_g|_g^2dv_g \right)^{{1}/{2}} \left(\int_M|W_g|_g^4dv_g \right)^{{1}/{2}} \leq \frac{1}{25C_0}\left(\int_M|W_g|_g^4dv_g \right)^{{1}/{2}}.
\]
The proof of (\ref{W3R2WRW2b}) is similar.  To prove (\ref{W3R2WRW2c}), one only need to notice 
\[\aligned
 \int_M|B_g|_g|W_g|_g^2dv_g & \leq \left(\int_M|B_g|_g^2|W_g|_g^2dv_g\right)^{{1}/{2}} \left(\int_M|W_g|^2dv_g\right)^{{1}/{2}} 
 \\& \leq\frac{1}{25C_0}\left(\int_M|B_g|_g^4dv_g\right)^{{1}/{4}}\left(\int_M|W_g|_g^4dv_g\right)^{{1}/{4}}
\endaligned\]
and use the fact that for any positive $a, b$ and $\eta$, 
$ab \le \frac 12 (\eta a^2 + \frac 1\eta b^2).$
\end{proof}

\begin{lemma} \label{lem3.3} 
Assume (\ref{C0}), then one has
\begin{subequations}\label{S4W4R4}
\begin{align}
& \left(\int_MS_g^4dv_g\right)^{{1}/{2}}\leq C_0\int_M|\nabla S_g|_g^2dv_g+\frac{C_0}{6}\int_MS_g^3dv_g, \label{S4W4R4a}\\
& \left(\int|W|_g^4dv_g \right)^{{1}/{2}}\leq C_0\int|\nabla W|_g^2dv_g+\frac{C_0}{6}\int S_g|W|_g^2dv_g, \label{S4W4R4b} \\
& \left(\int_M|B|_g^4dv_g\right)^{{1}/{2}}\leq C_0\int_M|\nabla B|_g^2dv_g+\frac{C_0}{6}\int_MS_g|B|_g^2dv_g. \label{S4W4R4c}
\end{align}
\end{subequations}
\end{lemma}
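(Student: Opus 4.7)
My plan is to first establish a master inequality that is conformal in nature: for any reasonable scalar function $f$ on $M$,
\[
\left(\int_M |f|^4 dv_g\right)^{1/2} \le C_0 \int_M |\nabla f|_g^2 dv_g + \frac{C_0}{6}\int_M S_g f^2 dv_g,
\]
and then obtain parts (a), (b), (c) by choosing $f$ appropriately. Inequality (a) follows immediately by setting $f = S_g$. For (b) and (c), I would take $f = |W|_g$ (resp.\ $f = |B|_g$) and invoke Kato's inequality $|\nabla |T||_g \le |\nabla T|_g$ pointwise to replace $|\nabla f|_g$ by $|\nabla W|_g$ (resp.\ $|\nabla B|_g$). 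A brief regularization argument handles the zero set where $|T|$ may fail to be smooth.

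The heart of the proof is the master inequality. Since $dv_g = u^4 dv_0$ when $g=u^2g_0$, one has $\int|f|^4 dv_g=\int (uf)^4 dv_0$, so I would apply the Aubin-Sobolev inequality (\ref{SI}) on $(M,g_0)$ to the test function $uf$. Since $C_0\ge C_s$ and both terms on the right of (\ref{SI}) are nonnegative, one may replace $C_s$ by $C_0$ at the cost of a weaker inequality:
\[
\left(\int (uf)^4\,dv_0\right)^{1/2} \le C_0 \int |\nabla(uf)|_{g_0}^2 dv_0 + K_s \int (uf)^2 dv_0.
\]
Expanding $|\nabla(uf)|_{g_0}^2 = u^2|\nabla f|_{g_0}^2 + 2uf\langle\nabla u,\nabla f\rangle_{g_0} + f^2|\nabla u|_{g_0}^2$ and integrating the cross term by parts via the identity $2uf\langle\nabla u,\nabla f\rangle_{g_0}=\tfrac{1}{2}\langle\nabla f^2,\nabla u^2\rangle_{g_0}$, the $f^2|\nabla u|_{g_0}^2$ contribution cancels and one gets
\[
\int |\nabla(uf)|_{g_0}^2 dv_0 = \int u^2 |\nabla f|_{g_0}^2 dv_0 - \int f^2\, u\,\Delta_{g_0} u\,dv_0.
\]

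Now I would invoke the two key conformal identities: the transformation law $u^2|\nabla f|_{g_0}^2 dv_0 = |\nabla f|_g^2 dv_g$, and the conformal scalar curvature equation (\ref{ConfSc}), which rearranges to $u\Delta_{g_0}u = \tfrac{1}{6}(S_0 u^2 - S_g u^4)$. Substituting, the right-hand side becomes
\[
\int |\nabla f|_g^2 dv_g + \frac{1}{6}\int S_g f^2\, dv_g - \frac{S_0}{6}\int (uf)^2 dv_0.
\]
Plugging back into the Sobolev estimate, the $\int(uf)^2 dv_0$ terms combine to give a coefficient $K_s - \tfrac{C_0 S_0}{6}$, which is $\le 0$ precisely because $C_0 \ge \tfrac{6K_s}{S_0}$ by (\ref{C0}). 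Dropping this nonpositive term yields the master inequality.

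The main obstacle is purely bookkeeping: managing the integration by parts cleanly so the unwanted $|\nabla u|_{g_0}^2$ and $|\nabla f|_{g_0}^2$ terms reassemble into an intrinsic expression in the metric $g$. Once the conformal scalar curvature equation is used at the right moment, the definition of $C_0$ in (\ref{C0}) is exactly what is needed to absorb the remaining $L^2$ term, after which (a)--(c) drop out by the choices of $f$ indicated above, with Kato's inequality as the only additional input for the tensor-valued cases.
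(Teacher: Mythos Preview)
Your proposal is correct and follows essentially the same approach as the paper: apply the Sobolev inequality (\ref{SI}) to $uf$, expand $|\nabla(uf)|_{g_0}^2$, integrate by parts, use the conformal scalar curvature equation (\ref{ConfSc}) and the definition of $C_0$ to absorb the $\int(uf)^2dv_0$ term, and finish with Kato's inequality for the tensor cases. The only cosmetic difference is that you package the argument as a single ``master inequality'' for general $f$, whereas the paper writes out the computations for $S_g$ and $|W|_g$ separately (noting the $|B|_g$ case is similar).
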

\begin{proof}
The proofs of (\ref{S4W4R4a}) and (\ref{S4W4R4c}) are essentially the same as in \cite{ChX}. In fact, according to the Sobolev inequality (\ref{SI}) and the definition of $C_0$, 
\[\left(\int_MS_g^4dv_g\right)^{{1}/{2}}  = \left(\int_M S_g^4u^4 dv_0\right)^{{1}/{2}} \leq C_0\int_M|\nabla(S_gu)|_0^2dv_0+K_s\int_MS_g^2u^2dv_0.\]
For the first term, we have (c.f. the proof of lemma 4.2 of \cite{ChX})
\[\aligned
\int_M|\nabla(S_gu)|_0^2dv_0 & = \int_M|\nabla_0 S_g|_0^2u^2dv_0+2 \int_M \langle \nabla S_g,\nabla u \rangle_0S_gudv_0+\int_MS_g^2|\nabla u|_0^2dv_0 \\
& = \int_M|\nabla_g S_g|_g^2dv_g+ \int_MS_g^2(\frac{S_gu^3-S_0u}{6})udv_0.
\endaligned\]
So we get
\[
\left(\int_MS_g^4dv_g\right)^{{1}/{2}}  \le  C_0\int_M|\nabla S_g|_g^2dv_g+\frac{C_0}{6}\int_MS_g^3dv_g+(K_s-\frac{C_0S_0}{6})\int_MS_g^2u^2dv_0,\]
which proves (\ref{S4W4R4a}). To prove (\ref{S4W4R4b}),
we start with  
\[
\left(\int|W|_g^4dv_g \right)^{{1}/{2}} = \left(\int_M |W|_g^4u ^4 \right)^{{1}/{2}} dv_0 
\leq  C_0\int_M|\nabla|W|_gu|^2_0dv_0+K_s\int_M|W|_g^2u^2dv_0,
\]
and for the first term, we use
\[\aligned
\int_M|\nabla|W|_gu|^2_0dv_0 &= \int_M |\nabla|W|_g|_0^2 u^2 dv_0+\frac{1}{2}\int_M \langle \nabla|W|_g^2,\nabla u^2\rangle_0dv_0 + 
\int_M|W|_g^2|\nabla u|_0^2dv_0 \\
& = \int_M|\nabla|W|_g|_g^2dv_g-\frac{1}{2}\int_M|W|_g^2\Delta_0u^2dv_0+ \int_M|W|_g^2|\nabla u|_0^2dv_0 
\\& = \int_M|\nabla|W|_g|_g^2dv_g - \int_M|W|_g^2u\Delta_0udv_0
\\& = \int_M|\nabla|W|_g|_g^2dv_g+ \int_M|W|_g^2u\frac{S_gu^3-S_0u}{6}dv_0
\\ & \le \int_M|\nabla W|_g^2dv_g+\frac{1}{6}\int_M|W|_g^2S_gdv_g -\frac{S_0}{6} \int_M|W|_g^2u^2dv_0.
\endaligned\]
The proof of (\ref{S4W4R4c}) is similar.
\end{proof}
 
\begin{lemma}\label{lem3.4} 
Assume (\ref{SCB}), then 
\begin{subequations}\label{SW2S3SR2}
\begin{align}
& \left|\int_MS_g|W|_g^2dv_g\right|\leq \frac 1{8C_0} \left(\int_M|W|_g^4u^4dv_0\right)^{{1}/{2}}+1080\frac{a_1a_2}{a_0}, \label{SW2S3SR2a}\\
& \left|\int_MS_g^3dv_g\right|\leq \frac 1{8C_0}\left(\int_MS_g^4dv_g\right)^{{1}/{2}}+\frac{12960}{29}\frac{a_1a_2}{a_0}, \label{SW2S3SR2b} \\
& \left|\int_MS_g|B|_g^2dv_g\right|^2\leq \frac 1{8C_0}\left(\int_M|B|_g^4dv_g\right)^{{1}/{2}}+1080\frac{a_1a_2}{a_0}. \label{SW2S3SR2c}
\end{align}
\end{subequations}
\end{lemma}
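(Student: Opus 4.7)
The plan is to decompose $S_g$ into its spatial average plus a fluctuation, bound the fluctuation contribution using (\ref{SCB}), and bound the contribution from the average using the heat invariants $a_0, a_1, a_2$. Specifically, let $\bar{S}_g := \frac{1}{a_0}\int_M S_g \, dv_g = \frac{6 a_1}{a_0}$ be the mean scalar curvature. For each of the three inequalities I write
\[
\int_M S_g \, \Phi \, dv_g = \bar{S}_g \int_M \Phi \, dv_g + \int_M (S_g - \bar{S}_g)\, \Phi \, dv_g,
\]
with $\Phi = |W|_g^2$ for (a), $\Phi = S_g^2$ for (b), and $\Phi = |B|_g^2$ for (c). Cauchy--Schwarz applied to the fluctuation term gives
\[
\left|\int_M (S_g - \bar{S}_g)\, \Phi \, dv_g\right| \leq \left(\int_M (S_g - \bar{S}_g)^2 dv_g\right)^{1/2} \left(\int_M \Phi^2 \, dv_g\right)^{1/2}.
\]

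The key observation --- and the only non-routine step --- is that the variance
\[
\int_M (S_g - \bar{S}_g)^2 dv_g \;=\; \int_M S_g^2 \, dv_g - \frac{\bigl(\int_M S_g \, dv_g\bigr)^2}{\int_M dv_g}
\]
is a conformal-isospectral invariant: $a_0$ and $a_1$ are spectral invariants by (\ref{heata0})--(\ref{heata1}), while $\int_M S_g^2 dv_g$ is a spectral invariant within a fixed conformal class (as noted at the end of Section 2.2). Hence the hypothesis (\ref{SCB}), although stated for $g_1$, transfers to every $g \in [g_0]$ that is isospectral to $g_1$, yielding
\[
\left(\int_M (S_g - \bar{S}_g)^2 dv_g\right)^{1/2} \leq \frac{1}{8 C_0}.
\]
This produces precisely the first term in each of (a), (b), (c).

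For the mean term $\bar{S}_g \int_M \Phi \, dv_g$, I invoke the expression $180\, a_2 = \int_M \bigl(|W|_g^2 + |B|_g^2 + \tfrac{29}{12} S_g^2\bigr) dv_g$ from (\ref{heata2}) and the nonnegativity of the three summands to obtain $\int_M |W|_g^2 dv_g \leq 180\, a_2$, $\int_M |B|_g^2 dv_g \leq 180\, a_2$, and $\int_M S_g^2 dv_g \leq \tfrac{2160}{29}\, a_2$. Multiplying by $|\bar{S}_g| = 6 a_1/a_0$ produces the stated coefficients $1080\, a_1 a_2/a_0$ in (a) and (c), and $\tfrac{12960}{29}\, a_1 a_2/a_0$ in (b).

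There is no serious obstacle: once one recognizes that (\ref{SCB}) is really a bound on the $L^2$ variance of $S_g$, and that this variance is preserved under conformal-isospectral deformation, the argument reduces to Cauchy--Schwarz plus the elementary bounds furnished by $a_2$. The only mildly delicate point worth flagging is the implicit use of isospectrality within the conformal class to carry the hypothesis on $g_1$ over to any $g$ to which the lemma is ultimately applied.
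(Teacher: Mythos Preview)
Your argument is correct and essentially identical to the paper's: both decompose $S_g$ into its $dv_g$-mean plus fluctuation, apply Cauchy--Schwarz to the fluctuation against $\Phi$, invoke (\ref{SCB}) to bound the variance by $1/(8C_0)$, and control the mean term via $\bar S_g = 6a_1/a_0$ together with the obvious inequalities $\int |W|^2 dv_g,\ \int |B|^2 dv_g \le 180\,a_2$ and $\int S_g^2 dv_g \le \tfrac{2160}{29}\,a_2$ coming from (\ref{heata2}). Your explicit remark that (\ref{SCB}) transfers from $g_1$ to any conformally isospectral $g$ (via the spectral invariance of $\int S_g^2 dv_g$ within a conformal class) is a point the paper leaves implicit here and only states later.
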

\begin{proof}
The estimate (\ref{SW2S3SR2a}) follows from
\[\begin{split}
\int_MS_g|W|_g^2dv_g &= \int_M \left(S_gu^2-\frac{\int_MS_gu^4dv_0}{\int_Mu^4dv_0}u^2\right)|W|_g^2u^2dv_0 \\
& \qquad\qquad +\frac{\int_MS_gu^4dv_0}{\int_Mu^4dv_0}\int_M|W|_g^2u^4dv_0\\
&\leq \bigg[\!\int_M\!S_g^2u^4dv_0-\frac{(\int_MS_gu^4dv_0)^2}{\int_Mu^4dv_0}\bigg]^\frac{1}{2}\!\left(\int_M\!|W|_g^4u^4dv_0\right)^{{1}/{2}}+1080\frac{a_1a_2}{a_0},
\end{split}\]
and the proofs of (\ref{SW2S3SR2b}) and (\ref{SW2S3SR2c}) are similar.
\end{proof}

Substituting the estimates (\ref{W3R2WRW2a})-(\ref{SW2S3SR2c}) into the heat invariant $a_3$,  we get  
\begin{lemma}\label{Lemma3.8} 
Under the assumptions of lemmas \ref{lem3.2}, \ref{lem3.3} and \ref{lem3.4}, we have
\begin{equation}\label{WBS4bound}
\aligned
\frac{7}{3}\left(\int|W|_g^4dv_g\right)^{{1}/{2}}& +\frac{4}{3}\left(\int|B|^4_gdv_g\right)^{{1}/{2}}+\frac{152}{9}\left(\int_MS_g^4dv_g\right)^{{1}/{2}}\\
\leq & \left[\frac{97}{18} \frac 18+\frac{1}{25}(\frac{94}{9}+\frac{6}{\eta})\right]\left(\int_M|W_g|_g^4dv_g\right)^{{1}/{2}}\\
& +\left[\frac{1}{25}(6\eta+\frac{40}{3}) +\frac{52}{9} \frac 18 \right]\left(\int_M|B|_g^4dv_g\right)^{{1}/{2}}
\\& +\frac{337}{54} \frac 18 \left(\int_MS_g^4dv_g\right)^{{1}/{2}}+{18800C_0} \frac{a_1a_2}{a_0} -7!a_3C_0.
\endaligned\end{equation}
\end{lemma}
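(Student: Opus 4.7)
This lemma is an algebraic assembly: start from the explicit expression \eqref{a3} for $a_3$, multiply through by $-7!\,C_0$, and then invoke Lemmas \ref{lem3.2}, \ref{lem3.3}, \ref{lem3.4} in turn to trade every integrand on the right-hand side for one of the three target $L^4$ curvature norms $(\int|W|_g^4 dv_g)^{1/2}$, $(\int|B|_g^4 dv_g)^{1/2}$, $(\int S_g^4 dv_g)^{1/2}$, absorbing the residue into an $a_1a_2/a_0$ remainder and a $-7!\,a_3\,C_0$ term. Combining the two cubic Weyl traces in \eqref{a3} (whose coefficients $\tfrac{38}{9}$ and $\tfrac{56}{9}$ sum to $\tfrac{94}{9}$), the starting identity reads
\[
-7!\,a_3\,C_0 = C_0\!\int_M\!\Bigl(\tfrac{7}{3}|\nabla W|^2 + \tfrac{4}{3}|\nabla B|^2 + \tfrac{152}{9}|\nabla S_g|^2 - 5S_g|W|^2 - \tfrac{50}{9}S_g|B|^2 - \tfrac{185}{54}S_g^3 - \tfrac{94}{9}W^3 - 12\,BW^2 - \tfrac{40}{3}B^2W\Bigr)dv_g.
\]

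Lemma \ref{lem3.3} is then used to rewrite each gradient term: each of its three inequalities takes the form $C_0\int|\nabla X|^2 \ge (\int X^4)^{1/2} - \tfrac{C_0}{6}\int S_g|X|^2$ for $X\in\{W,B,S_g\}$. Substituting these pushes the desired LHS $\tfrac{7}{3}(\int W^4)^{1/2}+\tfrac{4}{3}(\int B^4)^{1/2}+\tfrac{152}{9}(\int S_g^4)^{1/2}$ onto the left, and simultaneously adds $\tfrac{7}{18},\tfrac{2}{9},\tfrac{152}{54}$ extra units of $C_0\int S_g|W|^2, C_0\int S_g|B|^2, C_0\int S_g^3$ respectively to the existing coefficients $5, \tfrac{50}{9}, \tfrac{185}{54}$ already present in $a_3$; the combined totals $\tfrac{97}{18}, \tfrac{52}{9}, \tfrac{337}{54}$ match \eqref{WBS4bound}. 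Applying Lemma \ref{lem3.4} to each of these three $\int S_g\cdot(\cdot)^2$ integrals, the $\tfrac{1}{8C_0}$ factor cancels the leading $C_0$ and produces the coefficients $\tfrac{97}{18}\cdot\tfrac{1}{8}, \tfrac{52}{9}\cdot\tfrac{1}{8}, \tfrac{337}{54}\cdot\tfrac{1}{8}$ in front of the target $L^4$ norms. Finally, Lemma \ref{lem3.2} handles the three cubic integrals: $|\int W^3|$ gives $\tfrac{94}{9}\cdot\tfrac{1}{25}(\int W^4)^{1/2}$, $|\int B^2W|$ gives $\tfrac{40}{3}\cdot\tfrac{1}{25}(\int B^4)^{1/2}$, and the mixed term $|\int BW^2|$ is split by the Peter--Paul parameter $\eta$ from \eqref{W3R2WRW2c} into $\tfrac{6\eta}{25}(\int B^4)^{1/2}+\tfrac{6}{25\eta}(\int W^4)^{1/2}$.

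What remains is bookkeeping, and the only mild subtlety is verifying the constant $18800$ in the $a_1a_2/a_0$ remainder. Here one uses the uniform bound $\int_M S_g^2 dv_g \le 180\,a_2$---a valid but slightly loose consequence of \eqref{heata2} replacing the sharper $\tfrac{2160}{29}a_2$ invoked in the proof of \eqref{SW2S3SR2b}---so that all three $\int S_g\cdot(\cdot)^2$ estimates in Lemma \ref{lem3.4} contribute the common tail $1080\,a_1a_2/a_0$. Then
\[
C_0\cdot 1080\cdot\Bigl(\tfrac{97}{18}+\tfrac{52}{9}+\tfrac{337}{54}\Bigr)\tfrac{a_1a_2}{a_0} \;=\; C_0(5820+6240+6740)\tfrac{a_1a_2}{a_0} \;=\; 18800\,C_0\tfrac{a_1a_2}{a_0},
\]
as claimed. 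No new analytic ingredient is needed; the entire argument is a careful linear combination of previously established inequalities, and the main ``obstacle'' is simply coefficient tracking.
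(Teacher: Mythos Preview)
Your proposal is correct and follows exactly the approach the paper intends: the paper's own ``proof'' is a single sentence (``Substituting the estimates (\ref{W3R2WRW2a})--(\ref{SW2S3SR2c}) into the heat invariant $a_3$, we get''), and you have carried out that substitution in full, with the coefficient arithmetic checked. Your observation that the constant $18800$ arises only if one uses the looser bound $\int S_g^2\,dv_g\le 180\,a_2$ uniformly (rather than the sharper $\tfrac{2160}{29}a_2$ from (\ref{SW2S3SR2b})) is exactly right and explains a point the paper leaves implicit.
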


As a consequence, we can prove
\begin{proposition}\label{NER}
Let $g$ be any metric as described in theorem \ref{mainthm}, then there exist constants $A_1, A_2$ such that
\begin{equation}\label{nablaRB}
\int_M|\nabla R|_g^2dv_g\leq A_1
\end{equation}
and
\begin{equation}\label{R4B}
\int_M|R|^4dv_g\leq A_2.
\end{equation}
\end{proposition}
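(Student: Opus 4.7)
The plan is to turn Lemma~\ref{Lemma3.8} into an absolute a priori bound. Setting $X := (\int_M |W_g|_g^4\,dv_g)^{1/2}$, $Y := (\int_M |B_g|_g^4\,dv_g)^{1/2}$ and $Z := (\int_M S_g^4\,dv_g)^{1/2}$, the inequality (\ref{WBS4bound}) reads schematically $\frac{7}{3}X + \frac{4}{3}Y + \frac{152}{9}Z \le \alpha(\eta)\,X + \beta(\eta)\,Y + \gamma\,Z + (\text{spectral data})$, where $\alpha(\eta) = \frac{97}{144} + \frac{94}{225} + \frac{6}{25\eta}$, $\beta(\eta) = \frac{6\eta}{25} + \frac{8}{15} + \frac{13}{18}$, $\gamma = \frac{337}{432}$, and the last term equals $18800\,C_0\, a_1 a_2/a_0 - 7!\,C_0\,a_3$. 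Since $a_0, a_1, a_2, a_3$ are spectral invariants, and since (\ref{WB}) (a conformal invariant) and (\ref{SCB}) (a spectral invariant within the conformal class, as recalled in the preliminaries) hold for every metric in $[g_0]$ isospectral to $g_1$, this ``spectral data'' is a uniform constant depending only on $g_1$.

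The next step is to choose $\eta$ so that $\alpha(\eta) < \frac{7}{3}$ and $\beta(\eta) < \frac{4}{3}$. A direct computation shows that these two conditions force $\eta$ to lie in an explicit non-empty interval (for instance $\eta = 1/4$ works), while the third inequality $\gamma < \frac{152}{9}$ is trivial and $\eta$-independent. Fixing such an $\eta$ and absorbing the $X, Y, Z$ terms on the right into the left side of (\ref{WBS4bound}) yields uniform upper bounds on $X, Y, Z$ depending only on the spectral invariants of $g_1$ and on $C_0$. The bound (\ref{R4B}) is then immediate: by (\ref{RWBS}), $|R|^2 = |W|^2 + 2|B|^2 + \frac{1}{6}S^2$, and combining with $(a+b+c)^2 \le 3(a^2+b^2+c^2)$ gives $\int|R|^4\,dv_g \le 3 X^2 + 12 Y^2 + \frac{1}{12}Z^2$.

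For the gradient estimate (\ref{nablaRB}), since $\nabla g \equiv 0$, the Kulkarni--Nomizu decomposition of $R_g$ (displayed just before (\ref{RWBS})) differentiates termwise, so $|\nabla R|_g^2$ is bounded pointwise by a constant times $|\nabla W|_g^2 + |\nabla B|_g^2 + |\nabla S|_g^2$. To control these three, I would rearrange the formula (\ref{a3}) for the spectral invariant $a_3$ so as to isolate $\frac{7}{3}\int|\nabla W|^2 + \frac{4}{3}\int|\nabla B|^2 + \frac{152}{9}\int|\nabla S|^2$ on one side; every remaining cubic and triple-Weyl integral on the other side is precisely one of the quantities estimated in Lemmas~\ref{lem3.2} and \ref{lem3.4}, and now that $X, Y, Z$ are bounded those lemmas deliver a uniform upper bound. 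The main obstacle, already surmounted by Lemma~\ref{Lemma3.8}, is arranging the coefficients so that the self-improving inequality in the first paragraph actually closes, i.e.\ so that an admissible $\eta$ exists at all; this is the structural role played by the precise numerical thresholds $1/625$ in (\ref{WB}) and $1/64$ in (\ref{SCB}).
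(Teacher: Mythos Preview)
Your proposal is correct and follows essentially the same approach as the paper: the paper also fixes a specific $\eta$ (it takes $\eta=\tfrac15$, while you take $\eta=\tfrac14$; both make $\alpha(\eta)<\tfrac73$ and $\beta(\eta)<\tfrac43$) to close the inequality (\ref{WBS4bound}) and bound $X,Y,Z$, then invokes (\ref{RWBS}) for (\ref{R4B}) and rearranges the $a_3$ formula exactly as you describe to obtain (\ref{nablaRB}).
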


\begin{proof}
Take $\eta=\frac{1}{5}$ in (\ref{WBS4bound}). It is easy to see that the quantity   
\[\left(\int_MS_g^4dv_g\right)^{{1}/{2}}+\left(\int_M|B|_g^4dv_g\right)^{{1}/{2}}+\left(\int_M|W|_g^4\right)^{{1}/{2}}\] 
is bounded. In view of (\ref{RWBS}) we get 
\[\int_M|R|_g^4dv_g\leq A_2.\]

To obtain a bound on $\int_M|\nabla R|_g^2dv_g$, we notice that according to the formula (\ref{a3}) of $a_3$, we can write
\[\int_{M}\left[\frac{7}{3}|\nabla W|^2+\frac{4}{3}|\nabla B|+\frac{152}{9}|\nabla S_g|^2\right]dv_g\]
as
\[\aligned
\int_M & \left[  5S_g|W|^2+\frac{50}{9}S_g|B|^2+\frac{185}{54}S_g^3 +\frac{38}{9}W^{ijkl}W_{ij}^{~~mn}W_{klmn}\right. \\
& \left. +12B^{ij}W_{i}^{~klm}W_{jklm} +\frac{40}{3}B^{ij}B^{kl}W_{ijkl}+\frac{56}{9}W^{ijkl}W_{i~k}^{~m~n}W_{jmln}\right]dv_g-7!a_3,
\endaligned\]
which can be controlled by the integral 
\[\int_M|R|^4_gdv_g\]
using the H\"older inequality and the estimates above. 
\end{proof}


\section{The proof of  theorem \ref{mainthm} under condition (\ref{star})}

We will start with proving the following proposition, which claims that under the condition (\ref{star}), the conformal factors $u_j$'s are uniformly bounded. We remark that for the negative scalar curvature case, this was proved in \cite{Xu1}.
\begin{proposition} \label{upperlower}
Let $g_0$ be a metric on $M^4$, and $g=u^2g_0$  a metric satisfy conditions (\ref{star}), (\ref{WB}) and (\ref{SCB}), then there exist constants $C_{\alpha}, C_\beta>0$ such that   $C_{\alpha} \le u \le C_\beta$.
\end{proposition}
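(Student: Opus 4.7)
The plan is to prove the upper bound $u \leq C_\beta$ by Moser iteration on the Yamabe-type equation (\ref{ConfSc}), and then deduce the lower bound $u \geq C_\alpha$ by combining the non-blowup condition (\ref{star}) with a Harnack chaining argument. The essential input for both halves is Proposition \ref{NER}, which supplies $\int_M |R|_g^4 dv_g \leq A_2$, and in particular the uniform bound $\int_M S_g^4 dv_g \leq 36 A_2$ (using $|R_g|^2 \geq S_g^2/6$ from (\ref{RWBS})).

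For the upper bound, I test (\ref{ConfSc}) against $u^{2q-1}$ for $q \geq 1$ and integrate by parts to obtain
\[
\frac{6(2q-1)}{q^2}\int_M |\nabla u^q|^2 dv_0 + S_0 \int_M u^{2q} dv_0 = \int_M S_g u^{2q+2} dv_0.
\]
Writing $S_g = \bar S_g + (S_g - \bar S_g)$ with $\bar S_g := \int_M S_g dv_g / \mathrm{Vol}(M,g)$, the oscillating part is controlled via Cauchy--Schwarz and (\ref{SCB}):
\[
\left|\int_M (S_g - \bar S_g) u^{2q+2} dv_0 \right| \leq \left(\int_M (S_g-\bar S_g)^2 dv_g\right)^{1/2}\left(\int_M u^{4q} dv_0\right)^{1/2} \leq \frac{1}{8C_0}\left(\int_M u^{4q} dv_0\right)^{1/2}.
\]
The Sobolev inequality (\ref{SI}) applied to $v = u^q$, together with $K_s \leq C_0 S_0/6$ from the definition (\ref{C0}) of $C_0$, converts control of $\int |\nabla u^q|^2 dv_0$ into control of $(\int u^{4q} dv_0)^{1/2}$. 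Absorbing the oscillating term and rearranging yields a recursive inequality bounding $\|u\|_{L^{4q}(dv_0)}$ in terms of $\|u\|_{L^{2q+2}(dv_0)}$ and the bounded quantity $\bar S_g$. Starting from the spectral invariant $\int u^4 dv_0 = a_0$ and relating the intermediate exponents $2q+2 < 4q$ by H\"older interpolation (log-convexity of $p \mapsto \|u\|_p$), the iteration bootstraps to $\|u\|_{L^p(dv_0)}$ bounded for every finite $p$, and a final Moser step (or an elliptic regularity argument) then yields $\|u\|_{L^\infty} \leq C_\beta$.

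With the upper bound in hand, I rewrite (\ref{ConfSc}) as the linear elliptic equation $-6\Delta_0 u + c\,u = 0$ with coefficient $c = (S_g u^2 - S_0)/6$. The bound $u \leq C_\beta$ combined with $\int_M S_g^4 dv_g \leq 36 A_2$ places $c$ in $L^p(dv_0)$ for some $p > 2 = n/2$, which is exactly the threshold for the Serrin--Moser weak Harnack inequality: the positive solution $u$ satisfies $\sup_B u \leq C \inf_B u$ on every geodesic $g_0$-ball $B$ of fixed radius. The non-blowup hypothesis (\ref{star}) supplies at least one point $x_\star \in M$ with $u(x_\star) \geq r_0$, and chaining the Harnack inequality along a finite cover of $M$ by overlapping balls propagates this to a uniform positive lower bound $u \geq C_\alpha$.

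The main obstacle is ensuring the Moser iteration for the upper bound actually closes up: the coefficient $6(2q-1)/q^2$ in front of the gradient term decreases to zero as $q$ grows, so the smallness quantified in (\ref{SCB}) must be compared step-by-step against this coefficient, and the H\"older interpolation constants relating $\|u\|_{2q+2}$ to $\|u\|_{4q}$ and $\|u\|_{2q}$ must be tracked carefully so that the sequence of $L^p$-norms remains bounded as $p \to \infty$.
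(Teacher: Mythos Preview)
Your upper-bound argument has a genuine gap: the Moser iteration you describe never uses the non-blowup hypothesis (\ref{star}), yet (\ref{star}) is \emph{essential} for a uniform upper bound. A clean counterexample is $(M,g_0)=(S^4,g_{can})$ with the standard conformal dilations $\phi_t$: the metrics $\phi_t^*g_0=u_t^2g_0$ are isometric to $g_0$, so $S_{g_t}\equiv S_0$ (hence (\ref{SCB}) holds with left side zero) and $W\equiv 0$ (hence (\ref{WB}) holds), but $\sup u_t\to\infty$. This family violates (\ref{star}), which is precisely why (\ref{star}) is part of the hypotheses. More generally, the nonlinearity $u^3$ is the \emph{critical} Sobolev exponent in dimension $4$, so no global Moser scheme based only on (\ref{SI}) and the fixed smallness in (\ref{SCB}) can absorb $\int S_g u^{2q+2}$ once $6(2q-1)/q^2$ drops below $1/(8C_0)+\bar S_g a_0^{1/2}$; the obstacle you flag in your last paragraph is not a matter of bookkeeping but an actual obstruction.

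The paper proceeds in the \emph{opposite} order. It first obtains the lower bound $u\ge C_\alpha$ directly from a Green's-function representation for $1/u$: using $\int u^{-4}dv_0\le C_1$ (Proposition~3.1 of \cite{ChX}, which is where (\ref{star}) enters) together with the $L^4(dv_g)$ bound on $S_g$ from Proposition~\ref{NER}, one estimates $\|u^{-2}\Delta_0 u\|_{L^4(dv_0)}$ and bounds $1/u(x)$ pointwise by $\|G_x\|_{L^{4/3}}\|u^{-2}\Delta_0 u\|_{L^4}$ plus the average of $1/u$. With $u\ge C_\alpha$ in hand, the bound $\int S_g^4\,dv_g\le A$ upgrades to $S_g\in L^4(dv_0)$, which renders the equation \emph{subcritical} and makes the upper bound accessible (this is the step referred to \cite{Xu1}). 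Your Harnack-chain argument for the lower bound is fine once an upper bound is known, but you need the order reversed: secure the lower bound first (using (\ref{star})), then iterate for the upper bound.
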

\begin{proof} 
We first notice that although the proposition 3.1  in \cite{ChX} was stated under the condition $S_0<0$, the same argument works for the case $S_0>0$ without any change.  So there exists a constant $C_1$ such that
\begin{equation}
\int_Mu^{-4}dv_0 \le  C_1. 
\end{equation} 
In particular, this also implies $\int_Mu^{-1}dv_0 \le  C_1'$ for some constant $C_1'$.

Also in the proof of proposition \ref{NER} above we see that there is a constant $C_2$ so that 
\[
\int_MS_g^4u^4 dv_0 = \int_MS_g^4 dv_g \le C_2.
\] 
So by (\ref{ConfSc}), one can find a constant $C$ so that 
\begin{equation*}
\aligned
\int_M(\Delta_{g_0} u)^4u^{-8}dv_0 &= \frac{1}{6^4}\int_M|S_gu^3-S_0u|^4u^{-8}dv_0\\
&\leq\frac{16}{6^4}\int_M \left(|S_gu^3|^4+|S_0u|^4 \right)u^{-8}dv_0\\
&\le C. 
\endaligned\end{equation*}  

On the other hand, if we let $G$ be the Green's function on $M$ (with respect to $g_0$) which can be assumed to be positive everywhere, then $G_x(y)= G(x, y)$ is $L^p$ integrable for $p<2$ since $M$ is 4-dimensional. Since
\[\Delta \frac 1u =-\frac 1{u^{2}}\Delta u+\frac 2{u^{3}}|\nabla u|^2,\]
we get from Green's formula that for any point $x\in M$,
\[\aligned
\frac 1{u(x)}-(\int_Mdv_0)^{-1}\int_M\frac 1u dv_0& =-\int_M G(x,y)\left[-\frac 1{u^{2}}\Delta u +\frac 2{u^{3}}|\nabla u|^2\right]dv_0(y) \\
& \le  \int_M G(x,y)\left[ \frac 1{u^{2}}\Delta u \right]dv_0(y) \\
& \le \|G_x\|_{L^{4/3}}  \left \|\frac 1{u^{2}}\Delta u \right \|_{L^{4}}.
\endaligned\]
It follows that there exists a constant $C_{\alpha}>0$ which does not depend on $u$ such that for any $x \in M$, 
\[u(x)\geq C_\alpha>0.\]

The proof of the upper bound is similar to that of proposition 1 in \cite{Xu1}. So we will omit the details here. 
\end{proof}

We shall use the $C^k$ version of the Cheeger-Gromov compactness :
\begin{CGC}[\cite{OPS}, \cite{ChX},  \cite{Ch}, \cite{Gro}]
For any $k$, the space of $n$-dimensional Riemannian manifolds satisfying the bounds
\begin{subequations}\label{CkCpt}
\begin{align}
& |\nabla^jR|_{C_0}\leq \Lambda(j), \qquad  j\leq k, \label{CkCpta}\\
&  \mathrm{Vol}(M,g)\geq v>0, \label{CkCptb} \\
&  \mathrm{diam}(M, g)\leq D \label{CkCptc}
\end{align}
\end{subequations}
is (pre)compact in the $C^{k+1,\alpha}$ topology on $M$.
\end{CGC}

More precisely, given any $\alpha<1$, any sequence of metrics $\{g_i\}$ on $M$ satisfying  (\ref{CkCpta})-(\ref{CkCptc})  has a subsequence converging in the $C^{k+1,\alpha'}$ topology for $\alpha'<\alpha$ to a limit $C^{k+1,\alpha}$ Riemannian metric $g$ on $M$. 

\begin{proof}[Proof of theorem \ref{mainthm} under the condition (\ref{star})]
One need to verify (\ref{CkCpta})-(\ref{CkCptc})  for Riemannian metrics $g_j=u_j^2g_0$ satisfying the condition (\ref{star}).
 
The bound (\ref{CkCpta}) follows from proposition \ref{NER}, proposition \ref{upperlower} above, and  proposition 3 of \cite{Xu1}. 
The bound  (\ref{CkCptb}) follows from the first heat invariant $a_0$. 
To prove (\ref{CkCptc}), one need to use the fact that $C_\alpha \le u_j \le C_\beta$. It follows that $C_\alpha^2 g_0 \le g_j= u_j^2 g_0 \le C_\beta^2 g_0$.  
Now for any $p, q \in M$, let $\gamma$ be the minimal geodesic (with respect to the metric $g_1$) connecting $p$ and $q$. Then 
\[
\mathrm{dist}_{g_j}(p, q) \le L_{g_j}(\gamma) \le C_\beta^2 L_{g_1}(\gamma)=C_\beta^2 \mathrm{dist}_{g_1}(p,q) \le C_\beta^2 \mathrm{diam}(M, g_1). 
\]
So (\ref{CkCptc}) follows. 
\end{proof}

 As noticed by \cite{CY1} and \cite{Xu2},  on $(S^4,g_{can})$ where $g_{can}$ is the canonical round metric on $S^4$, if $g_j=u_j^2g_{can}$ is a sequence of conformal metrics satisfying 
\[C_0=\int_{S^4}u_j^4dv_0\]
and $\lambda_1(g_j)\geq\Lambda>0$, then there exist a sequence of conformal factors  $v_j$'s such that each $v_j^2g_{can}$ is  isometric  to $u_j^2g_{can}$, and $v_j$'s satisfy the condition (\ref{star}) {with universal $r_0$, $l_0$ depending only on $C_0$ and $\Lambda$}. As a consequence, 
\begin{corollary}[\cite{Xu2}]\label{S4St}
Theorem \ref{mainthm} holds for $S^4$ with the canonical round metric $g_{can}$. 
\end{corollary}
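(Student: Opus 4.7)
The plan is to reduce this to the instance of Theorem \ref{mainthm} already handled in this section, namely the version assuming the non-blowup condition (\ref{star}), by exploiting the noncompactness of the conformal group of $(S^4, g_{can})$. Take any sequence $g_j = u_j^2 g_{can}$ of metrics in $[g_{can}]$ isospectral to $g_1$. Because the volume $a_0$ is a heat invariant (see (\ref{heata0})), we have $\int_{S^4} u_j^4 dv_0 = \mathrm{Vol}(S^4, g_j) = C_0$, a fixed constant independent of $j$. Moreover, the first nonzero eigenvalue is a spectral invariant, so $\lambda_1(g_j) = \lambda_1(g_1) =: \Lambda > 0$ is also fixed along the sequence.

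Next, I would invoke the normalization fact stated in the paragraph preceding the corollary (established in \cite{CY1}, \cite{Xu2}): for the round four-sphere, the conformal group $\mathrm{Conf}(S^4, g_{can})$ strictly contains the isometry group, so given any conformal factor $u$ with $\int u^4 dv_0 = C_0$ and $\lambda_1(u^2 g_{can}) \ge \Lambda$, one can apply an appropriate M\"obius transformation to produce a new conformal factor $v$ with the same volume normalization, with $v^2 g_{can}$ isometric to $u^2 g_{can}$, and such that $v$ satisfies (\ref{star}) with constants $r_0, l_0$ depending only on $C_0$ and $\Lambda$. Applying this to each $u_j$ yields a replacement sequence $v_j$ with $\tilde g_j := v_j^2 g_{can}$ isometric (hence isospectral) to $g_j$, and with all $v_j$ satisfying (\ref{star}) uniformly.

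Since the hypotheses (\ref{WB}) and (\ref{SCB}) on $g_0 = g_{can}$ and $g_1$ are preserved under this renaming, the sequence $\tilde g_j$ falls into the setting of Theorem \ref{mainthm} under the extra assumption (\ref{star}) that was just verified earlier in this section. Hence $\{v_j\}$ admits a subsequence converging in $C^\infty$ to a positive smooth conformal factor $v_\infty$, and the limit metric $v_\infty^2 g_{can}$ lies in $[g_{can}]$ and is isospectral to $g_1$. This is precisely the compactness in the sense described in the remark following Theorem \ref{mainthm}.

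The only delicate step is the second one, namely the M\"obius normalization: one needs a uniform choice of conformal automorphism moving the mass of $u_j^4 dv_0$ away from a single point, and the existence of such an automorphism with the quantitative bound on $r_0, l_0$ is the classical maneuver specific to the round sphere. On a generic $(M, g_0)$ this shortcut is unavailable, which is exactly why the rest of the paper must invoke the conformal sphere theorem of Theorem \ref{mainthm2} to rule out a degenerating conformal factor; for $S^4$ itself the normalization bypasses that machinery entirely.
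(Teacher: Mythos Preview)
Your proposal is correct and follows essentially the same approach as the paper: the paper does not give a separate proof of the corollary, but derives it immediately from the preceding paragraph (the M\"obius normalization from \cite{CY1}, \cite{Xu2}) together with the version of Theorem~\ref{mainthm} under condition~(\ref{star}) just established. Your write-up makes this explicit and adds the observation that the relevant hypotheses are spectral or conformal invariants, which is exactly the implicit reasoning behind the paper's one-line deduction.
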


\section{Mass Concentration}

For the rest of the paper, we will study the isospectral compactness for the sequence $\{g_j=u_j^2g_0\}$ under the assumption that the condition $(\ref{star})$ fails for any subsequence of $\{u_j\}$.  
We will show this can happen only when some subsequence of $u_j$ has its mass ``concentrate" at some point $x_0\in M$.

We will start with a technical lemma that we will need several times later. For simplicity we denote
\begin{equation}
C_g =\int_MS_{g}^2dv_{g}-\frac{(\int_MS_{g}dv_{g})^2}{\int_Mdv_{g}}.
\end{equation}
We notice that for $g$ in the same conformal class,  $C_g$ is in fact a spectral invariant.  
\begin{lemma}
\label{Lem4.1}  
Let $(M,g_0)$ be a 4-dimension manifold, $g=u^2g_0$, and $\eta$ a positive cut-off function which will be chosen later. Then for $\beta\neq0$ and $\beta\neq-1$ we have
\begin{equation}
\aligned
\left(\int_M{\omega^4\eta^4dv_0}\right)^{{1}/{2}} &\leq 2C_s\left(6\frac{A_\beta}{|\beta|}+1\right)\int_M|\nabla\eta|^2\omega^2dv_0+
(A_\beta |S_0|+K_s) \int_M\omega^2\eta^2dv_0  \\
 &+  C_sC_g^{{1}/{2}}A_\beta\left(\int_M\omega^4\eta^4dv_0\right)^{{1}/{2}}+A_\beta C_s \bigg|\frac{\int_MS_gu^4dv_0}{\int_Mu^4dv_0}\bigg| \int_Mu^2\omega^2\eta^2dv_0,
\endaligned
\end{equation}
where $\omega=u^{\frac{1+\beta}{2}}$,   
  $A_\beta=\frac{|1+\beta|^2}{6|\beta|}$, $C_s$ and $K_s$ are as in (\ref{SI}). 
\end{lemma}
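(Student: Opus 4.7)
My plan is to start from the Aubin Sobolev inequality (\ref{SI}) applied to the test function $f=\omega\eta$, which yields
\[
\left(\int_M \omega^4\eta^4\, dv_0\right)^{{1}/{2}} \le C_s\int_M |\nabla(\omega\eta)|^2\, dv_0 + K_s\int_M \omega^2\eta^2\, dv_0.
\]
Expanding $|\nabla(\omega\eta)|^2 = \eta^2|\nabla\omega|^2 + 2\eta\omega\,\nabla\omega\cdot\nabla\eta + \omega^2|\nabla\eta|^2$ and bounding the cross term by the elementary Cauchy inequality $2ab\le a^2+b^2$ gives
\[
\int_M |\nabla(\omega\eta)|^2\, dv_0 \le 2\int_M \omega^2|\nabla\eta|^2\, dv_0 + 2\int_M \eta^2|\nabla\omega|^2\, dv_0,
\]
so everything reduces to bounding $\int_M\eta^2|\nabla\omega|^2\, dv_0$ in terms of the three kinds of quantities on the right-hand side of the lemma.

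Since $\omega=u^{(1+\beta)/2}$ I have $u^{\beta-1}|\nabla u|^2=\tfrac{4}{(1+\beta)^2}|\nabla\omega|^2$. Multiplying the conformal scalar curvature equation (\ref{ConfSc}) by $u^{\beta}\eta^2$, integrating over $M$, and integrating by parts produces
\[
\frac{4\beta}{(1+\beta)^2}\int_M\eta^2|\nabla\omega|^2\,dv_0 = -\frac{S_0}{6}\int_M\omega^2\eta^2\,dv_0 + \frac{1}{6}\int_M S_g u^{3+\beta}\eta^2\,dv_0 - 2\int_M u^{\beta}\eta\,\nabla u\cdot\nabla\eta\,dv_0.
\]
Rewriting the cross term via $u^{\beta}\nabla u = \tfrac{2}{1+\beta}\omega\nabla\omega$ and applying the weighted Cauchy inequality with weight $c=|\beta|/|1+\beta|$, the coefficient of $\int_M\eta^2|\nabla\omega|^2$ produced on the right becomes exactly $\tfrac{1}{2}$, which can be absorbed into the left-hand side. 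This yields the clean intermediate estimate
\[
\int_M\eta^2|\nabla\omega|^2\,dv_0 \le \frac{A_\beta|S_0|}{2}\int_M\omega^2\eta^2\,dv_0 + \frac{A_\beta}{2}\left|\int_M S_g u^{3+\beta}\eta^2\,dv_0\right| + \frac{6A_\beta}{|\beta|}\int_M\omega^2|\nabla\eta|^2\,dv_0.
\]

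Finally, to produce the $C_g^{{1}/{2}}$ factor I use the standard centering trick. With $S_{\mathrm{av}} = (\int_M u^4\,dv_0)^{-1}\int_M S_g u^4\,dv_0$ and $u^{3+\beta}\eta^2 = u^2\omega^2\eta^2$, I decompose
\[
\int_M S_g u^{3+\beta}\eta^2\,dv_0 = \int_M (S_g-S_{\mathrm{av}})\,u^2\omega^2\eta^2\,dv_0 + S_{\mathrm{av}}\int_M u^2\omega^2\eta^2\,dv_0,
\]
and apply Cauchy-Schwarz to the first integral together with the elementary identity $\int_M(S_g-S_{\mathrm{av}})^2 u^4\,dv_0 = C_g$ to bound it by $C_g^{{1}/{2}}\left(\int_M\omega^4\eta^4\,dv_0\right)^{{1}/{2}}$. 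Substituting everything back into the Sobolev inequality and collecting the coefficients of $\int\omega^2|\nabla\eta|^2$, $\int\omega^2\eta^2$, $(\int\omega^4\eta^4)^{1/2}$ and $\int u^2\omega^2\eta^2$ yields the stated bound. The main obstacle is the absorption step in the second paragraph: a careless choice of Cauchy weight either fails to isolate $\int\eta^2|\nabla\omega|^2$ on the left or produces constants different from the precise $6A_\beta/|\beta|$ and $A_\beta$ appearing in the lemma, so the weight $c=|\beta|/|1+\beta|$ must be selected with care.
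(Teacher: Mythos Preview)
Your proof is correct and follows essentially the same route as the paper: apply Sobolev to $\omega\eta$, multiply the conformal scalar curvature equation by $u^\beta\eta^2$ and integrate by parts, absorb the cross term via a weighted Cauchy inequality (your weight $c=|\beta|/|1+\beta|$ is equivalent to the paper's choice $t=|\beta|/2$ in the $u$-variable), and finish with the centering trick on $S_g$. The only cosmetic differences are that the paper keeps the estimate in terms of $u^{\beta-1}|\nabla u|^2$ and splits into the cases $\beta>0$ and $\beta<0$, whereas you convert to $|\nabla\omega|^2$ first and work uniformly with absolute values; note also that your computation (and the paper's own proof) actually produces the coefficient $C_sA_\beta|S_0|+K_s$ rather than the $A_\beta|S_0|+K_s$ printed in the lemma, which appears to be a typo in the statement.
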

\begin{proof} 
For simplicity, we will hence forth abbreviate  $\int dv_0$ as $\int$.   
Applying the Sobolev inequality (\ref{SI}) to   
the function $f=\omega\eta$,  we get 
\[
\aligned
\left(\int \eta^4\omega^4 \right)^{{1}/{2}} &\leq C_s\int|\nabla(\omega\eta)|^2+K_s\int\omega^2\eta^2  \\
 &\leq 2C_s\int|\nabla\omega|^2\eta^2+2C_s\int\omega^2|\nabla\eta|^2+K_s\int\omega^2\eta^2.
\endaligned
\]

Next we multiply both sides of
\[6\Delta u+S_gu^3=S_0u \]
by $\eta^2u^{\beta}$ and integrate, to get  
\[
6\beta\int_M\eta^2u^{\beta-1}|\nabla u|^2 +12\int_M \nabla u\cdot\nabla \eta \eta u^{\beta} +S_0\int_M\eta^2u^{\beta+1} =\int_MS_gu^2\eta^2u^{\beta+1}.\]
We can control the second term via
\[
\left|2\int_M\nabla u\nabla \eta \eta u^{\beta}\right|
\leq
\frac{1}{t}\int_M|\nabla\eta|^2u^{\beta+1} + t\int_M\eta^2|\nabla u|^2u^{\beta-1}. 
\]
For any $t$ with $0<t<|\beta|$,  
\[\aligned
-\frac{1}{t}\int|\nabla\eta|^2u^{\beta+1}-t\int\eta^2|\nabla u|^2u^{\beta+1} & \\
 \leq2\int (\nabla u & \cdot\nabla \eta)  \eta u^{\beta} 
 \\ & 
 \leq\frac{1}{t}\int|\nabla\eta|^2u^{\beta+1}+t\int\eta^2|\nabla u|^2u^{\beta-1}.
 \endaligned\]
It follows that for $\beta<0$ one has
\begin{equation}\label{beta-1}
6(|\beta|-t)\int\eta^2u^{\beta-1}|\nabla u|^2\leq\frac{6}{t}\int|\nabla\eta|^2u^{\beta+1}+|S_0|\int \eta^2u^{\beta+1}-\int S_gu^2\eta^2u^{\beta+1},
\end{equation}
while for $\beta>0$, one has
\begin{equation}
6(|\beta|-t)\int\eta^2u^{\beta-1}|\nabla u|^2\leq\frac{6}{t}\int|\nabla\eta|^2u^{\beta+1}+|S_0|\int \eta^2u^{\beta+1}+\int S_gu^2\eta^2u^{\beta+1}. 
\end{equation}
Take $t=\frac{|\beta|}{2}$ we get for $\beta<0$, 
\[\frac{12|\beta|}{|1+\beta|^2}\int|\nabla\omega|^2\eta^2\leq\frac{12}{|\beta|}\int|\nabla\eta|^2\omega^2+|S_0|\int\omega^2\eta^2-\int S_gu^2\omega^2\eta^2 \]
and for $\beta>0$, 
\[\frac{12|\beta|}{|1+\beta|^2}\int|\nabla\omega|^2\eta^2\leq\frac{12}{|\beta|}\int|\nabla\eta|^2\omega^2+|S_0|\int\omega^2\eta^2+\int S_gu^2\omega^2\eta^2. \]

So if $\beta<0$, we get
\[
\begin{split}
\left(\int \eta^4\omega^4\right)^{{1}/{2}} \leq &\frac{2C_s|1+\beta|^2}{12|\beta|} \left(\frac{12}{|\beta|}\int|\nabla\eta|^2\omega^2+ |S_0| \int\omega^2\eta^2 -  \int S_gu^2\omega^2\eta^2 \right) \\
& + 2C_s\int w^2|\nabla\eta|^2+K_s\int\omega^2\eta^2\\
 \leq  & 2C_s\left(\frac{|1+\beta|^2}{|\beta|^2}+1\right)\int|\nabla\eta|^2\omega^2+\left[2C_s\frac{|1+\beta|^2|S_0|}{12|\beta|^2}+K_s\right]\int \omega^2\eta^2\\
& + 2C_sC_g^{\frac{1}{2}}\frac{|1+\beta|^2}{12|\beta|^2}\left(\int(\omega\eta)^4\right)^{{1}/{2}}-2C_s\frac{|1+\beta|^2}{12|\beta|^2}\frac{\int S_gu^4}{\int u^4}\int u^2\omega^2\eta^2.
\end{split}
\]
Similarly, when $\beta>0$, we have
\[
\begin{split}
\left(\int \eta^4\omega^4\right)^{{1}/{2}} &\leq 2C_s\left(\frac{|1+\beta|^2}{|\beta|^2}+1\right)\int|\nabla\eta|^2\omega^2+\left[2C_s\frac{|1+\beta|^2|S_0|}{12|\beta|^2}+K_s\right]\int \omega^2\eta^2\\
&+ 2C_s\frac{|1+\beta|^2}{12|\beta|^2}C_g^{{1}/{2}}\left(\int(\omega\eta)^4\right)^{\frac{1}{2}}+2C_s\frac{|1+\beta|^2}{12|\beta|^2}\frac{\int S_gu^4}{\int u^4}\int u^2\omega^2\eta^2.
\end{split}
\]
This completes the proof.
\end{proof}

The following lemma is an analogue of lemma 1 in section 3 of \cite{CY1}:
\begin{lemma}\label{lem4.2} 
Suppose $(\ref{star})$ fails for any subsequence of a sequence of positive functions $\{u_j\}$ that satisfy 
\[\int_Mu_j^4dv_0=C_0,\] 
then $u_j\rightarrow0$ in $L^p$ for any $1 \le p<4$.
\end{lemma}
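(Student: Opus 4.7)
The plan is to first rephrase the failure of (\ref{star}) as a distributional statement about the $u_j$'s, and then combine this with the $L^4$ normalization $\int_M u_j^4 dv_0 = C_0$ via Hölder's inequality.

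The first step is to show that the hypothesis is equivalent to the assertion that for every $r > 0$,
\[
\mu_j(r) := \mathrm{Vol}(\{x \in M : u_j(x) \ge r\}) \longrightarrow 0 \quad \text{as } j \to \infty.
\]
Indeed, if this convergence failed for some $r_0 > 0$, we could extract a subsequence $\{u_{j_k}\}$ and find $l_0 > 0$ with $\mu_{j_k}(r_0) \ge l_0 \mathrm{Vol}(M, g_0)$ for all $k$; then (\ref{star}) would hold along $\{u_{j_k}\}$, contradicting the assumption. Conversely, if $\mu_j(r) \to 0$ for every $r>0$, no subsequence can verify (\ref{star}).

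With this in hand, fix $p \in [1,4)$ and an arbitrary $r > 0$, and split
\[
\int_M u_j^p \, dv_0 = \int_{\{u_j < r\}} u_j^p \, dv_0 + \int_{\{u_j \ge r\}} u_j^p \, dv_0.
\]
The first piece is bounded by $r^p \mathrm{Vol}(M, g_0)$, while Hölder's inequality with conjugate exponents $4/p$ and $4/(4-p)$ controls the second by
\[
\left(\int_M u_j^4 \, dv_0\right)^{p/4} \mu_j(r)^{(4-p)/4} = C_0^{p/4} \, \mu_j(r)^{(4-p)/4},
\]
which tends to $0$ as $j \to \infty$ by the first step, since $p < 4$ forces the exponent $(4-p)/4$ to be strictly positive. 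Hence $\limsup_j \int_M u_j^p \, dv_0 \le r^p \mathrm{Vol}(M, g_0)$, and letting $r \to 0$ gives $\int_M u_j^p \, dv_0 \to 0$.

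I do not anticipate a serious obstacle: the whole argument is a clean consequence of the measure-theoretic reformulation of $(\ref{star})$ together with the uniform $L^4$ bound that is built into the normalization. The restriction $p < 4$ is essential, since it is exactly what makes the Hölder exponent on $\mu_j(r)$ positive; capturing the missing $L^4$ mass is precisely the role of the concentration analysis to follow, which will identify a blow-up point $x_0 \in M$ and feed into Theorem \ref{mainthm2}.
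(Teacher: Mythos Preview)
Your proof is correct and follows essentially the same route as the paper's: both split $\int_M u_j^p$ over $\{u_j \ge r\}$ and its complement, bound the first piece by H\"older against the $L^4$ normalization and the second by $r^p\,\mathrm{Vol}(M)$, and then use the failure of (\ref{star}) to kill the level-set measure. The only cosmetic difference is that the paper argues by contradiction (assuming $\int u_j^p \ge \delta_0$ along a subsequence and producing $r_0, l_0$ verifying (\ref{star})), whereas you first reformulate the hypothesis as $\mathrm{Vol}(\{u_j \ge r\}) \to 0$ for every $r$ and argue directly.
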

\begin{proof} 
For each $r>0$ we set 
\[\Omega_{r,j}\triangleq\{x \in M: u_j(x)\geq r\}.\]
We argue by contradiction. Suppose the lemma fails, i.e. there exists some $p<4$ and $\delta_0>0$ such that 
\[\int u_j^pdv_0\geq\delta_0\] 
for some subsequence of $u_j$, which we still denote by $u_j$ for simplicity. Then for each $r>0$ we have
\[\aligned 
\delta_0 \le \int u_j^pdv_0& = \int_{\Omega_{r,j}}u_j^pdv_0+\int_{M\setminus\Omega_{r,j}}u_j^pdv_0
\\ & \leq\left(\int u_j^4\right)^{{p}/{4}} \, \mathrm{Vol}(\Omega_{r,j})^{{(4-p)}/{4}}+ r^p  \, \mathrm{Vol}(\Omega_{r,j}^c). 
\endaligned\]
Choose  $r_0$  small so that 
\[r_0^p \, \mathrm{Vol}(M, g_0)<\frac{\delta_0}{2},\] 
then we get
\[\frac{\delta_0}{2}\leq C_0^{{p}/{4}} \, \mathrm{Vol}(\Omega_{r_0,j})^{{(4-p)}/{4}}.\]
Thus  
\[ \mathrm{Vol}(\Omega_{r_0,j})\geq  \left(\frac{\delta_0}{2C_0^{p/4}} \right)^{{4}/{(4-p)}}=:l_0\] 
for each $u_j$, which contradicts with our assumption that the condition  (\ref{star})  fails for the sequence $\{u_j\}$.
\end{proof}

The proof of the following proposition is sililar to the proof of the proposition B in section 3 of \cite{CY1}. The main differences are that we use lemma \ref{Lem4.1} and \ref{lem4.2} for 4-dimensional manifolds, while they use their formula (9b) and lemma 1 in their paper for 3-manifolds. For completeness, we will give the detail of the proof in the appendix. 
\begin{proposition}[\cite{CY1}, proposition B] \label{newpropB} 
Suppose $\{u_j\}$ is a sequence of positive functions defined on $(M^4,g_0)$ such that $g_j=u_j^2g_0$ satisfy the following conditions 
\begin{enumerate}
\item $a_0(g_j)=\alpha_0$,
\item $a_1(g_j)\leq\alpha_1$,
\item $\int S_{g_j}^2u_j^4dv_0\leq\alpha_2$,
\item $0<\Lambda\leq\lambda_1(g_j)$, 
\item The  condition  (\ref{star})  fails for any subsequence of $\{u_j\}$.
\end{enumerate} 
Then there exists some subsequence of $\{u_j\}$ 
whose mass concentrates at some point $x_0\in M$.  
\end{proposition}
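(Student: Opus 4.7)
The plan is to show that, after extracting a subsequence, the measures $d\mu_j := u_j^4\,dv_0$ converge weakly-$\star$ on $M$ to a single Dirac mass $\alpha_0\,\delta_{x_0}$; this is the sense of mass concentration asserted. First, by hypothesis (1), $\{\mu_j\}$ is a bounded family of positive measures of total mass $\alpha_0$ on the compact manifold $M$, so Banach-Alaoglu supplies a subsequence with $\mu_j \rightharpoonup \mu$ and $\mu(M)=\alpha_0$. Apply Lemma \ref{lem4.2} to the rescaled sequence $v_j := (C_0/\alpha_0)^{1/4}u_j$, which satisfies that lemma's normalization and inherits the failure of \eqref{star} for every subsequence, to conclude $v_j \to 0$, hence $u_j\to 0$, in $L^p(M,dv_0)$ for every $1 \le p < 4$.

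Second, apply Lemma \ref{Lem4.1} with $\beta = 1$ (so $\omega = u$) to a smooth cutoff $\eta$ supported in a geodesic ball $B$ and equal to $1$ on a smaller concentric ball $B'$, and bound its scalar-curvature factors using (2) and (3): hypothesis (3) gives $C_g^{1/2} \le \alpha_2^{1/2}$, and hypothesis (2) combined with Cauchy-Schwarz controls the mean $|\int S_g u^4\,dv_0|/\int u^4\,dv_0$. Absorbing the $(\int u^4\eta^4\,dv_0)^{1/2}$ factor on the right into the left yields a local Sobolev-type estimate of the form
\begin{equation*}
\left(\int_{B'} u_j^4\,dv_0\right)^{1/2} \le C\int_{B}|\nabla\eta|_0^2\,u_j^2\,dv_0 + C\int_B u_j^2\,dv_0,
\end{equation*}
and combined with the $L^2$-decay of $u_j$ this forces $\int_{B'}u_j^4\,dv_0 \to 0$ for any ball $B'$ on which $\mu$ has small $\mu$-mass. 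Hence $\mu$ is purely atomic, supported on at most a finite set of points.

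Finally, assume for contradiction that two distinct points $x, y$ lie in $\mathrm{supp}\,\mu$. Pick disjoint open sets $V_x \ni x$ and $V_y \ni y$, and a smooth cutoff $\eta$ with $0\le\eta\le 1$, $\eta\equiv 1$ on a neighborhood of $x$ inside $V_x$, and $\eta\equiv 0$ on $V_y$. With $m_j := \alpha_0^{-1}\int\eta\,u_j^4\,dv_0$, the spectral gap $\lambda_1(g_j) \ge \Lambda$ applied to the $g_j$-mean-zero function $\eta - m_j$ gives
\begin{equation*}
\Lambda\int_M(\eta-m_j)^2\,u_j^4\,dv_0 \le \int_M|\nabla\eta|_0^2\,u_j^2\,dv_0,
\end{equation*}
whose right-hand side vanishes as $j \to \infty$ by the $L^2$-decay. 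Weak convergence yields $m_j \to m_\infty := \alpha_0^{-1}\int\eta\,d\mu \in (0,1)$ (strict because $x, y \in \mathrm{supp}\,\mu$) and the left-hand side converges to $\int_M(\eta-m_\infty)^2\,d\mu \ge (1-m_\infty)^2\,\mu(\{\eta=1\}) + m_\infty^2\,\mu(V_y) > 0$, a contradiction. Hence $\mathrm{supp}\,\mu = \{x_0\}$ and $\mu = \alpha_0\,\delta_{x_0}$. The main obstacle is the second step: the absorption of $(\int u^4\eta^4\,dv_0)^{1/2}$ in Lemma \ref{Lem4.1} requires the coefficient $C_s A_\beta C_g^{1/2}$ to be strictly less than $1$, which is where quantitative smallness of $C_g$ (provided by \eqref{SCB} in the main theorem) enters; the final Rayleigh-quotient step is then conceptually clean once local $L^4$-vanishing off atoms is in hand.
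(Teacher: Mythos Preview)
Your approach is essentially the same two-step scheme as the paper's (the appendix proof of Proposition~5.3): use Lemma~\ref{Lem4.1} with $\beta=1$ together with the $L^p$-decay from Lemma~\ref{lem4.2} to show mass can only survive at finitely many points, then use the Rayleigh quotient against $\lambda_1\ge\Lambda$ to rule out two or more concentration points. Your measure-theoretic packaging (pass to a weak-$\ast$ limit $\mu$, show $\mu$ is atomic, then single-atom) is a clean reformulation of the paper's direct argument with the set $A=\{x:\lim_{r\to 0}\varlimsup_j\int_{B(x,r)}u_j^4\neq 0\}$, and your test function $\eta-m_j$ in the last step is in fact tidier than the paper's hand-built $\phi$.

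One point needs tightening in your second step. When $\beta=1$ and $\omega=u$, the final term in Lemma~\ref{Lem4.1} is $A_\beta C_s\,\big|\tfrac{\int S_g u^4}{\int u^4}\big|\int u^4\eta^2$, which is \emph{not} of the form $C\int_B u^2$; to reach your displayed inequality you must first use Cauchy--Schwarz, $\int u^4\eta^2\le(\int u^4\eta^4)^{1/2}(\int_{\mathrm{supp}\,\eta}u^4)^{1/2}$, and then absorb. Thus the absorption needs \emph{two} smallness conditions: $C_sA_\beta C_g^{1/2}<1$ (from \eqref{SCB}, as you correctly flag at the end) \emph{and} $\int_{\mathrm{supp}\,\eta}u_j^4$ small. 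The paper makes this second requirement explicit; in your write-up it is only implicit in the phrase ``on which $\mu$ has small $\mu$-mass'' (and note: the smallness must hold on the larger ball $B=\mathrm{supp}\,\eta$, not just $B'$). Also, the bound on the mean $|\int S_g u^4|/\int u^4$ comes from hypotheses (1) and (3) via Cauchy--Schwarz, not from (2), since (2) is only a one-sided bound on $a_1$. With these adjustments your argument goes through and matches the paper's.
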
 
 
The next lemma is served as a replacement of Lemma 2 in \cite{CY1}. 
\begin{lemma}\label{newlemma2} 
Let $u$ be any positive function on $M$. 
Then for each point $x\in M$, there exists some neighborhood $\Omega(x)$ such that for every point $y\in\Omega(x)$ and geodesic ball $B(y,\rho)\subset\Omega(x)$ we have
\begin{equation}\label{krho3}
\int_{B(y,\rho)}|\nabla\log u|dv_0 \leq k\rho^3,
\end{equation}
where $k$ is a constant depending only on  $c_2=\int S^2_{g}u^4dv_0$, where $g=u^2g_0$.  In particular, there exists  a constant $p_0>0$ that depends only on $c_2$, such that
\begin{equation}\label{crho8}
\int_{B(y,\rho)}u^{p_0}dv_0 \int_{B(y,\rho)}u^{-p_0}dv_0\leq c\rho^8
\end{equation}
\end{lemma}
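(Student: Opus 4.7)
The plan is to derive an $L^2$ bound on $\nabla\log u$ over small balls from the conformal scalar curvature equation, convert this into the $L^1$ bound (\ref{krho3}) via Cauchy--Schwarz and volume comparison, and then obtain (\ref{crho8}) through the Poincar\'e inequality together with John--Nirenberg.

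For (\ref{krho3}), I would fix $x\in M$ and choose a small neighborhood $\Omega(x)$ on which the volume ratio $\mathrm{Vol}(B(y,\rho))/\rho^4$ and the usual Sobolev/Poincar\'e constants are uniformly under control. For $y\in\Omega(x)$ and $B(y,\rho)\subset\Omega(x)$, take a cutoff $\eta\in C^\infty_c(B(y,2\rho))$ with $\eta\equiv 1$ on $B(y,\rho)$ and $|\nabla\eta|_0\le C/\rho$. Multiplying the conformal scalar curvature equation $6\Delta_{g_0}u+S_g u^3=S_0 u$ by $\eta^2/u$ and integrating by parts yields
\[
6\int\eta^2|\nabla\log u|^2 dv_0 -12\int\eta\,\nabla\log u\cdot\nabla\eta\, dv_0 +\int S_g u^2 \eta^2 dv_0 = S_0\int\eta^2 dv_0.
\]
Absorbing the cross term by Cauchy--Schwarz, bounding $\bigl|\int S_g u^2\eta^2 dv_0\bigr|\le c_2^{1/2}\bigl(\int\eta^4 dv_0\bigr)^{1/2}$, and using the standard cutoff estimates $\int|\nabla\eta|^2 dv_0\le C\rho^2$, $\int\eta^2 dv_0\le C\rho^4$, $\int\eta^4 dv_0\le C\rho^4$, one gets
\[
\int_{B(y,\rho)}|\nabla\log u|^2 dv_0\le K\rho^2,
\]
with $K$ depending only on $c_2$ and the geometry of $g_0$ near $x$. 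One further Cauchy--Schwarz combined with $\mathrm{Vol}(B(y,\rho))\le C\rho^4$ then yields (\ref{krho3}).

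For (\ref{crho8}), set $f=\log u$ and denote its mean over $B(y,\rho)$ by $\bar f_{y,\rho}$. The Poincar\'e inequality on $B(y,\rho)$ gives
\[
\int_{B(y,\rho)}|f-\bar f_{y,\rho}|dv_0\le C\rho\int_{B(y,\rho)}|\nabla f|dv_0\le Ck\rho^4,
\]
so after dividing by $\mathrm{Vol}(B(y,\rho))\asymp\rho^4$ we see that $\log u$ lies in the local BMO class with a norm depending only on $c_2$. The John--Nirenberg inequality then produces $p_0>0$ and $c>0$ such that
\[
\int_{B(y,\rho)}e^{\pm p_0(f-\bar f_{y,\rho})}dv_0\le c\,\mathrm{Vol}(B(y,\rho)).
\]
Multiplying the $+p_0$ and $-p_0$ versions cancels the $\bar f_{y,\rho}$ factors and produces $\int_{B(y,\rho)}u^{p_0}dv_0\cdot\int_{B(y,\rho)}u^{-p_0}dv_0\le c^2\,\mathrm{Vol}(B(y,\rho))^2\le c\rho^8$.

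The main obstacle is the bookkeeping in the test-function step: every error term must be controlled by the spectral quantity $c_2$ (not by any norm of $u$ itself), which is what ultimately forces the constants $k$, $p_0$ and $c$ to depend only on $c_2$ and on the fixed neighborhood $\Omega(x)$. Once $\log u$ has been placed in BMO with a uniform norm, the John--Nirenberg part of the argument is entirely standard.
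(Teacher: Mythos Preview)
Your proposal is correct and essentially identical to the paper's argument: the paper obtains the $L^2$ bound on $\nabla\log u$ by specializing its earlier test-function identity (equation~(\ref{beta-1}) from the proof of Lemma~\ref{Lem4.1}) to $\beta=-1$, $t=\tfrac12$, which is exactly your ``multiply by $\eta^2/u$ and integrate by parts'' step, and then applies Cauchy--Schwarz and John--Nirenberg just as you do. The only cosmetic difference is that you spell out the Poincar\'e step placing $\log u$ in BMO before invoking John--Nirenberg, whereas the paper leaves this implicit.
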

 
\begin{proof}
The proof is similar to that of \cite{CY1}, so we only describe the difference here.  
By choosing a cut-off function $\eta$ satisfying $|\nabla \eta| \le \frac {2}{\rho}$ on $B_{2\rho}$, taking $\beta=-1$ and $t=\frac{1}{2}$ in (\ref{beta-1}),  we get the following replacement of (21) in \cite{CY1}:
\[
3\int_{B_{\rho}}u^{-2}|\nabla u|^2\leq 12\int_{B_{2\rho}}\frac{4}{\rho^2}+|S_0|\int_{B_{2\rho}}1 +\int_{B_{2\rho}}|S_g|u^2.
\]
Since
\[
\int_{B_{\rho}}|S_g|u^2\leq \left(\int S_g^2u^4\right)^{{1}/{2}}\mathrm{Vol}(B_{2\rho})^{{1}/{2}}, 
\] 
we immediately see 
\[\int \frac{|\nabla u|^2}{u^2}\leq k_1\rho^2\]
for some $k_1=k_1(c_2)$. So  
\[\int_{B_{\rho}}|\nabla\log u|dv_0=\int_{B_{\rho}}\bigg|\frac{\nabla u}{u}\bigg|dv_0\leq 
\left(\int \frac{|\nabla u|^2}{u^2} \right)^{{1}/{2}} \mathrm{Vol}(B_{\rho})^{\frac{1}{2}}\leq k \rho^3.\]

The proof of (\ref{crho8}) goes the same as in \cite{CY1}. Namely, we only need to apply the Jonh-Nirenberg inequality (\cite{JN}, see also \cite{GT}) to the function $\log u$.
\end{proof} 

Finally by applying the Nash-Moser iteration as in \cite{CY1}, with their lemma 2 replace by our lemma \ref{newlemma2}, one can prove the following proposition. (The proof will be included in the appendix.) 
\begin{proposition}[\cite{CY1}, Proposition C and Remark]\label{newPropC}
Suppose $\{u_j\}$ is a sequence of functions as in Proposition \ref{newpropB} with $x_0$ its concentration point. Then for each fixed $r$ that is small enough and each $p \ge 2$, there exists some integer $j(r, p)$ and  some universal constant $C=C(p, p_0)$ so that
\begin{equation}
\int_{B(x_0,r)-B(x_0,\frac{r}{2})}u_j^pdv_0\leq C\int_{B(x_0,2r)-B(x_0,r)}u_j^pdv_0
\end{equation}
for all $j\geq j(r, p)$.  
\end{proposition}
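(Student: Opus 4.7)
The goal is to run a De Giorgi--Nash--Moser iteration based on Lemma~\ref{Lem4.1}, upgrade it to a Harnack-type inequality on small balls disjoint from the concentration point $x_0$, and then chain those Harnack inequalities through the annulus---the four-dimensional analogue of the argument of Chang--Yang in \cite{CY1}. Fix $\rho$ comparable to $r$ and a point $y$ with $\mathrm{dist}(y,x_0)\ge 2\rho$, and apply Lemma~\ref{Lem4.1} with $\omega=u_j^{(1+\beta)/2}$ and a cutoff $\eta$ supported in $B(y,\rho)$, equal to $1$ on $B(y,\rho/2)$ and satisfying $|\nabla\eta|\le 2/\rho$. The gradient term $2C_s(6A_\beta/|\beta|+1)\int|\nabla\eta|^2\omega^2$ is the "good" term that will drive the iteration. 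The $C_sC_g^{1/2}A_\beta(\int\omega^4\eta^4)^{1/2}$ term can be absorbed into the left-hand side: by (\ref{SCB}) we have $C_s C_g^{1/2}\le 1/8$, so provided $\beta$ is chosen so that $A_\beta$ stays moderate (which is possible for any fixed target exponent by iterating in finitely many controlled steps), this term is dominated by a fraction of the left-hand side. The remaining two error terms $(A_\beta|S_0|+K_s)\int\omega^2\eta^2$ and $A_\beta C_s\bigl|\!\int S_gu_j^4/\!\int u_j^4\bigr|\int u_j^2\omega^2\eta^2$ are integrals of positive powers of $u_j$ supported away from $x_0$; Lemma~\ref{lem4.2} (together with the standing failure of (\ref{star})) gives $\int_{B(y,\rho)}u_j^{s}\,dv_0\to 0$ as $j\to\infty$ for every fixed $s<4$, so for $j\ge j(\rho,\beta)$ these terms are negligible compared to the gradient term. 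This yields the basic reverse Sobolev inequality
\begin{equation*}
\Big(\int_{B(y,\rho/2)}u_j^{2(1+\beta)}\,dv_0\Big)^{1/2}\le \frac{C(\beta)}{\rho^2}\int_{B(y,\rho)}u_j^{1+\beta}\,dv_0.
\end{equation*}

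A finite iteration of this inequality along a shrinking sequence of radii $\rho_k\downarrow\rho/2$ and a geometric progression of exponents $1+\beta_k$ (run in parallel for $\beta$ chosen so that $1+\beta<0$, in order to control negative powers) produces, for $j$ sufficiently large, the sup-estimates
\begin{equation*}
\sup_{B(y,\rho/4)}u_j\le C\rho^{-4/p_0}\Big(\int_{B(y,\rho)}u_j^{p_0}\,dv_0\Big)^{1/p_0},\qquad \sup_{B(y,\rho/4)}u_j^{-1}\le C\rho^{-4/p_0}\Big(\int_{B(y,\rho)}u_j^{-p_0}\,dv_0\Big)^{1/p_0}.
\end{equation*}
Multiplying these two estimates and invoking the reverse-H\"older bound (\ref{crho8}) from Lemma~\ref{newlemma2} with exponent $p_0$ yields a Harnack inequality $\sup_{B(y,\rho/4)}u_j\le K\inf_{B(y,\rho/4)}u_j$ with a constant $K$ independent of $j$ (once $j$ is large).

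To conclude, cover the enlarged annulus $B(x_0,2r)\setminus B(x_0,r/2)$ by finitely many balls $B(y_i,\rho/4)$ in such a way that any point of the inner annulus $B(x_0,r)\setminus B(x_0,r/2)$ can be joined to any point of the outer annulus $B(x_0,2r)\setminus B(x_0,r)$ by a uniformly bounded chain of these balls. Applying the Harnack inequality along each link transfers pointwise upper bounds on $u_j$ over the inner annulus into pointwise lower bounds on $u_j$ over the outer annulus, with a uniform constant $K'=K'(r,p_0)$; raising to the power $p$ and integrating gives
\begin{equation*}
\int_{B(x_0,r)\setminus B(x_0,r/2)}u_j^p\,dv_0\le C(p,p_0)\int_{B(x_0,2r)\setminus B(x_0,r)}u_j^p\,dv_0.
\end{equation*}
The main obstacle I expect is the absorption step in the first paragraph: the mixed term $A_\beta C_s\,\bigl|\!\int S_gu_j^4/\!\int u_j^4\bigr|\int u_j^2\omega^2\eta^2$ involves a power of $u_j$ one higher than $\omega^2$, so its smallness relies not only on Lemma~\ref{lem4.2} but also on the uniform bounds (2)--(3) of Proposition~\ref{newpropB} to keep $\int S_gu_j^4/\int u_j^4$ controlled independently of $j$; once this absorption is carried out, the remaining Moser iteration and Harnack chaining proceed in the standard way.
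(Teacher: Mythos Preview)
Your overall architecture (Moser iteration, John--Nirenberg reverse H\"older from Lemma~\ref{newlemma2}, then chaining) is the right one, but the absorption step has a genuine gap. You propose to absorb the term $C_sC_g^{1/2}A_\beta\bigl(\int\omega^4\eta^4\bigr)^{1/2}$ using only $C_sC_g^{1/2}\le 1/8$ from (\ref{SCB}). This requires $A_\beta=\frac{|1+\beta|^2}{6|\beta|}<8$, but along the Moser iteration the exponents $1+\beta_k=2^kp_0$ grow without bound (and to reach a pointwise Harnack inequality you must send $k\to\infty$), so $A_{\beta_k}\sim 2^kp_0/6\to\infty$ and the coefficient $C_sC_g^{1/2}A_{\beta_k}$ eventually exceeds $1$. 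Even if you stop at a finite $\bar p\ge p$ as the paper does, for large $p$ the same obstruction appears. The remark that ``$A_\beta$ stays moderate'' is exactly what fails.

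The paper's proof fixes this by \emph{not} invoking Lemma~\ref{Lem4.1} as stated. It returns to the raw term $\int S_g u^2\omega^2\eta^2$ inside the proof of that lemma and estimates it by a different H\"older splitting,
\[
\left|\int S_g u^2\omega^2\eta^2\right|\le\Bigl(\int_M S_g^4\,dv_g\Bigr)^{1/4}\Bigl(\int_{\mathrm{supp}\,\eta}u_j^4\,dv_0\Bigr)^{1/4}\Bigl(\int\omega^4\eta^4\Bigr)^{1/2}.
\]
The first factor is uniformly bounded by Proposition~\ref{NER} (this is where the third heat invariant enters); the second factor tends to $0$ as $j\to\infty$ because $\mathrm{supp}\,\eta\subset B_{r/4}^c$ and all the $L^4$-mass concentrates at $x_0$ (Proposition~\ref{newpropB}). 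Hence the full coefficient $C_sA_\beta(\int S_g^4 dv_g)^{1/4}(\int_{\mathrm{supp}\,\eta}u_j^4)^{1/4}$ can be made $<1/2$ for $j\ge j(r,p)$ \emph{regardless of how large $A_\beta$ is}, with the threshold on $j$ depending on $p$ through $\bar p$. This is precisely why the statement produces a $j(r,p)$. Your appeal to Lemma~\ref{lem4.2} does not substitute for this: once $1+\beta\ge 3$ the lower-order terms involve $\int u_j^{1+\beta}$ and $\int u_j^{3+\beta}$ with exponents $\ge 4$, which Lemma~\ref{lem4.2} does not control; the correct smallness mechanism is always the local $L^4$-mass $\int_{\mathrm{supp}\,\eta}u_j^4$ via H\"older, not $L^s$ with $s<4$.

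A secondary point: the paper never passes through a pointwise Harnack inequality. It runs the iteration on nested annuli $B_{\rho_k,\sigma_k}$ around $x_0$, stops at a finite exponent $\bar p\ge p$, and compares $\Phi(u_j,\bar p,B_{r,r/2})$ to $\Phi(u_j,-\bar p,B_{2r,r})$ directly through (\ref{crho8}). Your route through $\sup/\inf$ on small balls and a covering argument would work in principle, but it forces an infinite iteration and hence makes the absorption problem above unavoidable rather than merely $p$-dependent.
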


 We end this section by the following proposition that serves as a replacement of proposition D in \cite{CY1}:
\begin{proposition}\label{ReplacePD}
Let $\{u_j\}$ be a sequence of positive smooth functions that satisfies the assumptions of Proposition \ref{newpropB}, and so that the conformal sequence $g_j=u_j^2g_0$ satisfies (\ref{R4B}). 
Then there exist constants $C_j>0$ with $C_j\to\infty$ so that the sequence
\[
v_j=C_j u_j
\]
converges uniformly on compact subset of $M\setminus\{x_0\}$ to the Green's function of the conformal Laplacian $L  = -6 \Delta_0  +S_0 $.
\end{proposition}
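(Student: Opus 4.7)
Since $(M, g_0)$ has positive Yamabe invariant, the conformal Laplacian $L = -6\Delta_0 + S_0$ is a positive operator, hence invertible, and admits a positive Green's function $G$ with the standard leading singularity $G(x,y)\sim c\,d_{g_0}(x,y)^{-2}$ on the diagonal. The conformal scalar curvature equation rewrites as $Lu_j = S_{g_j}u_j^3$, yielding the representation
\[
u_j(x) = \int_M G(x,y)\,S_{g_j}(y)\,u_j(y)^3\,dv_0(y).
\]
Integrating gives $\int_M S_{g_j}u_j^3\,dv_0 = S_0\int_M u_j\,dv_0$. Guided by $\int_M LG(\cdot,x_0)\,dv_0 = 1$, I would set
\[
C_j := \frac{1}{S_0\int_M u_j\,dv_0},\qquad v_j := C_j u_j,\qquad \tilde f_j := Lv_j = C_j S_{g_j}u_j^3,
\]
so that $v_j(x) = \int_M G(x,y)\,\tilde f_j(y)\,dv_0(y)$ and $\int_M \tilde f_j\,dv_0 = 1$. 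Since $\int_M u_j\,dv_0\to 0$ by Lemma \ref{lem4.2}, we have $C_j\to\infty$.

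The heart of the argument is to show $\tilde f_j \to \delta_{x_0}$ weakly as signed measures, i.e., $\int_{M\setminus U}|\tilde f_j|\,dv_0 \to 0$ for every open $U\ni x_0$. H\"older with exponents $(4, 4/3)$ gives
\[
\int_{M\setminus U}|S_{g_j}|u_j^3\,dv_0 \le \Bigl(\int_M S_{g_j}^4\,dv_{g_j}\Bigr)^{1/4}\Bigl(\int_{M\setminus U}u_j^{8/3}\,dv_0\Bigr)^{3/4},
\]
and the first factor is bounded by Proposition \ref{NER}. The task thus reduces to proving the quantitative decay
\[
\int_{M\setminus U}u_j^{8/3}\,dv_0 = o\Bigl(\Bigl(\int_M u_j\,dv_0\Bigr)^{4/3}\Bigr),
\]
which I would establish by iterating the Harnack-type estimate of Proposition \ref{newPropC} on dyadic annuli $B(x_0,2^{-k}r)\setminus B(x_0,2^{-k-1}r)$ shrinking to $x_0$, combined with the mass concentration of Proposition \ref{newpropB} and Lemma \ref{lem4.2}. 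Once this is in hand, the continuity of $G(x,\cdot)$ at $x_0$ for each fixed $x\neq x_0$ gives the pointwise convergence $v_j(x)\to G(x,x_0)$.

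To upgrade to uniform convergence on compact subsets $K\subset M\setminus\{x_0\}$, I would apply elliptic regularity to $Lv_j = \tilde f_j$: combining $L^\infty$ bounds on $v_j|_K$ (obtained by Moser iteration on $Lu_j = S_{g_j}u_j^3$, using $\int_M S_{g_j}^4\,dv_{g_j}\le C_2$ and the Harnack-type estimate of Proposition \ref{newPropC}) with standard interior $C^{1,\alpha}$ estimates, one sees that $\{v_j\}$ is uniformly bounded in $C^{1,\alpha}(K)$; Arzel\`{a}--Ascoli then upgrades pointwise convergence to uniform convergence. Since $\tilde f_j = Lv_j$ has unit total mass and concentrates at $x_0$, the distributional limit $v_\infty$ satisfies $Lv_\infty = \delta_{x_0}$, forcing $v_\infty = G(\cdot,x_0)$.

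The main obstacle is the quantitative rate $\int_{M\setminus U}u_j^{8/3}\,dv_0 = o\bigl((\int_M u_j\,dv_0)^{4/3}\bigr)$: a naive application of H\"older or Lemma \ref{lem4.2} alone is insufficient to defeat the divergence of $C_j$, and the sharp comparison must exploit the doubling/Harnack structure of Proposition \ref{newPropC}, iterated on shrinking annuli about $x_0$, together with the fixed $L^4$-normalization $\int_M u_j^4\,dv_0 = \alpha_0$.
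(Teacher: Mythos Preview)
Your strategy differs substantially from the paper's, and the step you yourself flag as the ``main obstacle'' is a genuine gap that Proposition~\ref{newPropC} alone does not close.

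The paper avoids your rate problem entirely by choosing a different normalization: fix a small $r$, set $C_j$ so that $\int_{B_r^c}v_j^4\,dv_0=1$, and observe that
\[
\int_{B_r^c}(Lv_j)^{4/3}\,dv_0 \;\le\; \Bigl(\int_M S_{g_j}^4\,dv_{g_j}\Bigr)^{1/3}C_j^{-4/3}\longrightarrow 0
\]
is immediate from (\ref{R4B}). A subsequence then converges in $W^{2,4/3}$ on $B_r^c$ to some $\omega$ with $L\omega=0$. The substantive work is ruling out $\omega\equiv 0$: assuming this, Proposition~\ref{newPropC} (with $p=2$ and $p=4$) gives $\int_{B_{r/2}^c}v_j^2\to 0$ while $\int_{B_{r/2}^c}v_j^4$ stays bounded, and then Lemma~\ref{Lem4.1} with $\beta=1$ and a cutoff supported in $B_{r/2}^c$ forces $1\le \tfrac{2}{3}C_sC_{g_j}^{1/2}$ in the limit, contradicting (\ref{SCB}). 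A diagonal argument in $r$ plus the Gilbarg--Serrin isolated-singularity theorem then identifies $\omega$ with the Green's function.

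Your route instead requires $\int_{M\setminus U}u_j^{8/3}\,dv_0=o\bigl((\int_M u_j\,dv_0)^{4/3}\bigr)$, which you do not prove. Proposition~\ref{newPropC} compares the \emph{same} $L^p$ norm on adjacent annuli; it does not relate an $L^{8/3}$ norm on $M\setminus U$ to the global $L^1$ norm, and iterating it on ``annuli shrinking to $x_0$'' is obstructed by the $j\ge j(r,p)$ threshold. What would actually suffice is a uniform Harnack inequality on $M\setminus U$ giving $\sup_{M\setminus U}u_j\lesssim\int_M u_j$, hence $\int_{M\setminus U}u_j^{8/3}\lesssim(\int_M u_j)^{8/3}$; the ingredients in Lemma~\ref{newlemma2} and the Moser iteration inside the proof of Proposition~\ref{newPropC} might yield this, but that is a separate argument you have not supplied. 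Even granting the rate on $M\setminus U$, you still need $\int_M|\tilde f_j|\,dv_0$ bounded to conclude $\tilde f_j\rightharpoonup\delta_{x_0}$ (sign changes of $S_{g_j}$ near $x_0$ could produce large cancellation), and this amounts to a reverse-H\"older bound $\int_M u_j^{8/3}\lesssim(\int_M u_j)^{4/3}$ over all of $M$, which is strictly harder. Note also that your argument never invokes (\ref{SCB}), whereas in the paper's proof it is precisely this hypothesis that excludes the degenerate limit $\omega\equiv 0$.
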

\begin{proof}
For simplicity we denote $B_r=B(x_0,r)$ and $B_r^c=M\setminus B_r$.
In what follows we will fix a small ball $B_r$ and choose a constant $C_j$ so that 
\[C_j^4\int_{B_r^c}u_j^4dv_0=1.\]
According to Proposition \ref{newpropB}, it is clear that $C_j\to\infty$.
 
By (\ref{R4B}), one can find constant $D$ such that 
\[\int_M|S_{g_j}|^4dv_{g_j}\leq D.\] 
 Notice that if we denote $\tilde g_j=v_j^2 g_0$, then 
\[ 
\int_{B_r^c}(\frac{Lv_j}{v_j^2})^4dv_0
=\int_{B_r^c}S^4_{\tilde g_j}v_j^4dv_0
=\frac{1}{C_j^4}\int_{B_r^c}S^4_{g_j}u_j^4dv_0.  
\]
So  as $j\to\infty$,
\[
\int_{B_r^c}(Lv_j)^{4/3}dv_0 
\leq \left(\int_{B_r^c}(\frac{Lv_j}{v_j^2})^4dv_0 \right)^{1/3} \left(\int_{B_r^c}v_j^4dv_0 \right)^{{2}/{3}}
\leq \left( \frac{D}{C_j^4} \right)^{1/3}\to 0. 
\]
It follows that  
$\{v_j\}$ has a subsequence converges strongly in $W^{2, 4/3}$ to a solution $\omega$ of the equation 
\[L\omega\equiv 0\] 
on $B_r^c$. We need to verify that $\omega$ is strictly positive. Since $S_0>0$, according to the minimum principle  for elliptic operators it is enough to prove  $\omega\not\equiv 0 $ on $B_r^c$.

Assume on the contrary that $\omega\equiv0$ on $B_r^c$. Then 
\[\lim\limits_{j\to\infty}\int_{B_r^c}v_j^2=\int_{B_r^c}\omega^2=0.\]
This implies 
\[\lim\limits_{j\to\infty}\int_{B_{r/2}^c}v_j^2 = 0, \]
since by Proposition \ref{newPropC},  
\[\int_{B_{{r}/{2}}^c}v_j^2=C_j^2(\int_{B_r\setminus B_{{r}/{2}}}u_j^2+\int_{B_r^c}u_j^2)\leq C C_j^2\int_{B_r^c}u_j^2=C \int_{B_r^c}v_j^2.\]

Also if we apply Proposition \ref{newPropC} with $p=4$, we get
\[\int_{B_{{r}/{2}}^c}v_j^4=C_j^4(\int_{B_r\setminus B_{{r}/{2}}}u_j^4+\int_{B_r^c}u_j^4)\leq \widetilde C C_j^4\int_{B_r^c}u_j^4=\widetilde C\int_{B_r^c}v_j^4=\widetilde C,\]
where $\widetilde C$ is a constant that depends only on $p_0$. 

On the other hand, by choosing a cut-off function $\eta$ so that
\[|\nabla\eta|\leq\frac{c}{r}, \quad \mbox{\ and\ }\quad \eta\equiv1  \mbox{\ on\ } B_r^c, \quad   \eta\equiv0  \mbox{\ on\ } B_{\frac{r}{2}},\] 
and applying  Lemma \ref{Lem4.1} with $\beta=1$ and $u=v_j$ we get
\begin{equation*}
\begin{split}
\left(\int_M \eta^4 v_j^4\right)^{{1}/{2}} 
&\leq \frac{C}{r^2}\int_{B_{{r}/{2}}^c}v_j^2+\frac{2C_sC_{\tilde g_j}^{{1}/{2}}}{3}\left(\int_M v_j^4\eta^4\right)^{{1}/{2}}+\frac{2C_s}{3}\frac{\int_M S_{\tilde g_j}v_j^4}{\int_Mv_j^4} \int_M v_j^4\eta^2
\\ & 
=\frac{C}{r^2}\int_{B_{{r}/{2}}^c}v_j^2+\frac{2C_sC_{g_j}^{{1}/{2}}}{3}\left(\int_M v_j^4\eta^4\right)^{{1}/{2}}+\frac{2C_s}{3}\frac{\int_M S_{g_j}u_j^4}{\int_Mu_j^4}\frac{1}{C_j^2}\int_{B_{r/2}^c} v_j^4.
\end{split}
\end{equation*}
Dividing both sides by $\left(\int_M \eta^4 v_j^4\right)^{1/2} \ge 1$ and letting $j \to \infty$ we get
\[1 \le \frac{2C_sC_{g_j}^{{1}/{2}}}{3},\]
which contradicts to (\ref{SCB}).

The rest of the proof goes exactly like \cite{CY1}: One can apply the standard diagonal trick to construct a sequence of functions $v_j=c_ju_j$, such that on $M \setminus \{x_0\}$, $v_j$ converges to a positive solution $\omega$ of $L\omega=0$. Then we apply the isolated singularity theorem of Gilbarg-Serrin \cite{GS} to conclude that $\omega\sim d(x,x_0)^{-2}$ which is the Green function of the conformal Laplacian.
\end{proof}

\section{Proof of theorem \ref{mainthm2}}

Finally we will prove theorem \ref{mainthm2}, which will also complete our proof of theorem \ref{mainthm}.

\begin{proof}[Proof of theorem \ref{mainthm2}]

We first prove that
\[g_{\omega} := \omega^2g_0 \] 
defines a flat metric on $M \setminus \{x_0\}$.  
In fact, by (\ref{R4B}) one can find a constant $C$ so that 
\[\int_M  |W(g_j)|_{g_j}^4 u_j^4 dv_{0}\le C.\]
So as $j \to \infty$ we have 
\[\int_M  |W(\tilde g_j) |_{\tilde g_j}^4 v_j^4 dv_{0} = \frac 1{C_j^4} \int_M  |W(g_j) |_{g_j}^4 u_j^4 dv_{0}  \to 0,\]
 where $\tilde g_j=v_j^2 g_0$ as we used in the proof of proposition \ref{ReplacePD}.   
Now let $K$ be any compact subset of $M\setminus\{x_0\}$. Then  
\[Lv_j\to L {\omega}\equiv0\] 
in $L^{4/3}$, hence $v_j \to \omega $ in $W^{2,4/3}$. By Fatou lemma,
\[
\int_K |W(g_\omega)|^4_{g_{\omega}}\omega^4 dv_{0} \leq\varliminf_{j\to\infty}\int_K | W(\tilde g_j)|_{\tilde g_j}^4 v_j^4dv_0=0.
\]
Hence the Weyl tensor $W(g_\omega)=0$ on $M \setminus \{x_0\}$. 
Apply the same argument to $Ric$, we also get $Ric(g_\omega)=0$ on $M \setminus \{x_0\}$. So $g_\omega$ is a flat metric on $M \setminus \{x_0\}$.

Next we prove that the Euler characteristic $\chi(M) > 0$. Suppose on the contrary that $\chi(M) \le 0$. Then we take $r>r'$ small and denote $A=B_r(x_0)$, $B=M\setminus \overline{B_{r'}(x_0)}$ such that $B$ can be retractable to $M\setminus \{x_0\}$. Then $A\cap B$ is homotopic  to $S^3$, while $A\cup B=M$.
So we have the following   Mayer-Vietoris sequence
\[\aligned
\to &   H_4(S^3)\to H_4(A)\oplus H_4(B)\to H_4(M)
\\ & \to H_3(S^3) \to H_3(A)\oplus H_3(B)  \to H_3(M)
\\ & {\color{white}\to} \to H_2(S^3)\to H_2(A)\oplus H_2(B)  \to H_2(M)
\\ &  {\color{white}\to\to} \to H_1(S^3) \to H_1(A)\oplus H_1(B)  \to H_1(M)
\\ &  {\color{white}\to\to\to} \to H_0(S^3)\to H_0(A)\oplus H_0(B)\to H_0(M)\to 0.
\endaligned\]
We can rewrite it as 
\[\aligned
\to & 0\to H_4(B)\xrightarrow{\phi_1}H_4(M) 
\\ & 
\xrightarrow{\phi_2}\mathbb{Z}\xrightarrow{\phi_3}H_3(B)\xrightarrow{\phi_4}H_3(M)
\\ &  {\color{white}\to} 
\to 0 \to H_2(B)\to H_2(M)
\\ &  {\color{white}\to\to} 
\to 0\to H_1(B)\to H_1(M)
\\ &  {\color{white}\to\to\to} \to  \mathbb{Z}\to\mathbb{Z}^2\to\mathbb{Z}\to 0,
\endaligned\]
from which we get isomorphisms 
\[H_1(M)\cong H_1(B), \quad H_2(B)\cong H_2(M)\]
and the relation
\[
rank~H_4(B) -rank H_4(M)+1-rank H_3(M))+rank H_3(B)=0.
\] 
So our assumption $\chi(M) \le 0$ implies $\chi(B)\le -1$ i.e. $\chi(M\setminus\{x_0\})\le -1$. This contradicts with corollary 3.3.5 in \cite{Wolf}.  

So we must have $\chi(M)>0$. According to \cite{CGY}, $M$ is diffeomorphic to either $S^4$ or $\mathbb{RP}^4$. We now show that $M$ cannot be diffeomorphic to $\mathbb{RP}^4$.  Our argument is close to the proof of corollary 3.3.5 in \cite{Wolf}. In fact, suppose  $M \simeq \mathbb{RP}^4$, then we have 
\begin{equation}\label{H1Mx0}
\pi_1(M \setminus \{x_0\}) \cong \pi_1(M) \cong\mathbb{Z}_2.
\end{equation}  
But by theorem 3.3.3 and theorem 3.3.1 in \cite{Wolf}, $M \setminus \{x_0\}$ admits a deformation retraction onto a compact totally geodesic submanifold $N$, and $N$ admits an $r$-fold covering by a torus $T$ for some $r>0$. If $\dim T>0$, then $\pi_1(N)$ contains a free abelian subgroup, which  contradicts to (\ref{H1Mx0}). If $\dim T=0$, then $M \setminus \{x_0\}$ contracts to a point, which contradicts to (\ref{H1Mx0}) again.

So we finally arrive at the conclusion that $M$ is diffeomorphic to $S^4$. Hence we have 
\[\chi(M \setminus \{x_0\}) =1.\]
We can apply  corollary 3.3.5 in \cite{Wolf} to conclude that  $(M\setminus\{x_0\},w^2g_0)$ is isometric to the flat $\mathbb{R}^4$.
Hence by Liouville's theorem,  $(M, g)$ is conformally equivalent to the round sphere $(S^4, g_0)$. This completes the proof.
\end{proof}

\section*{Appendix: The proofs of proposition 5.3 and 5.5}
\renewcommand\thesection{A}

For completeness, we will include the detailed proofs of proposition 5.3 and proposition 5.5 here. 
\begin{proof}[Proof of Proposition 5.3] $ $ 

As in [CY1], we will prove the proposition in two steps.
\begin{itemize}
\item Step I: The set of points where the mass of some subsequence of $\{u_j\}$ accumulates is nonempty:
\[A= \left\{x\in M:\lim_{r\rightarrow0}\varlimsup_{j\rightarrow\infty}\int_{B(x,r)}u_j^4\neq0\right\} \ne \emptyset.\]
\item 
Step I\!I:  The set $A$ above consists of exactly one point $x_0$.
\end{itemize}

\noindent\underline{Proof of Step I}. Suppose for each $x\in M$, we have
\[m_x:=\lim\limits_{r\to0}\varlimsup\limits_{j\to\infty}\int_{B(x,r)}u_j^4=0.\]
Then for any $x\in M$ fixed and any $\varepsilon>0$, one can find a subsequence $\{u_j\}$ so that  as $j\to\infty$ and $r$ sufficiently small,
\[\int_{B(x,r)}u_j^4<\varepsilon.\] 
We fix $r$ small and choose a cut-off function $\eta$ such that
\[0\leq \eta\leq 1,\quad \eta\equiv1 \text{\ on\ }B(x,\frac{r}{2}), \quad  \eta\equiv 0 \text{\ off\ }B(x,r),\quad \text{and\ }|\nabla\eta|\leq\frac{c}{r}.\]
Applying lemma 5.1 to $\beta=1,\omega=u_j$ and the above $\eta$, and notice 
\[\int_M u_j^2 \omega^2 \eta^2 \le \left(\int_{\mathrm{supp}\eta} \omega^4\eta^4  \right)^{1/2} \left(\int_{\mathrm{supp}\eta}u_j^4\right)^{1/2},\]
we obtain, for some constant $C$, 
\[\aligned
(\int\eta^4\omega^4)^{{1}/{2}}\leq & \frac{2C_sC_g^{{1}/{2}}}{3} \left(\int\eta^4\omega^4\right)^{{1}/{2}}+\frac{C}{r^2}\int_{B(x,r)}u_j^2dv_0\\ & \quad +\frac{2C_s}{3}\left|\frac{\int_M S_gu_j^4dv_0}{\int_Mu_j^4dv_0}\right| \left(\int_{B(x,r)} \omega^4\eta^4  \right)^{1/2} \left(\int_{B(x,r)}u_j^4\right)^{1/2}
\\ \le & \left(\frac 1{12}+\varepsilon^{\frac 12} \frac {2C_s}3 \left|\frac{a_1}{a_0}\right|\right)\left(\int\eta^4\omega^4\right)^{{1}/{2}}+\frac{C}{r^2}\int_{B(x,r)}u_j^2dv_0,
\endaligned\]
where we used $C_sC_g^{{1}/{2}}\leq\frac{1}{8}$. So for $\varepsilon$ sufficiently small and $j$ large we have
\[
\frac{1}{2}\left(\int_{B(x,{r}/{2})}u_j^4dv_0\right)^{{1}/{2}} \le \frac{1}{2}\left(\int_{B(x, r)}\eta^4\omega^4\right)^{{1}/{2}}\leq\frac{C}{r^2}\int_{B(x,r)}u_j^2dv_0.
\]
Now we cover $M$ by finitely many such balls $B(x_1,\frac{r_1}{2}), \cdots, B(x_N,\frac{r_N}{2})$. Then
\[a_0=\int_M u_j^4dv_0  \leq \sum_{k=1}^N\int_{B(x_k,{r_k}/{2})}u_j^4dv_0 \leq 4\sum_{k=1}^N\left(\frac{C}{r_k^2}\int_{B(x_k,r_k)}u_j^2dv_0\right)^2 \to 0,
\]
where we used lemma 5.2, which is a contradiction.

\vspace{.3cm}
\noindent \underline{Proof of Step I\!I}. Assume we have at least two points $x_1, x_2 \in A$. By passing to a subsequence of $\{u_j\}$ we may assume \[\lim\limits_{r\to0}\lim\limits_{j\to\infty}\int_{B(x_k,r)}u_j^4=m_k, \quad  k=1, 2.\]
As in [CY1] we let $\rho=\mathrm{dist}(x_1,x_2)$ and set
\[\mu_1=\limsup\limits_{j\to\infty}\int_{B(x_1,1)\setminus B(x_1,{1}/{2})}u_j^4dv_0,\]
which, by passing to a subsequene, becomes a limit. We then inductively choose subsequences of subsequences so that 
\[\mu_l=\lim_{j\to\infty}\int_{B(x_1,2^{-l+1})\setminus B(x_1,2^{-l})}u_j^4dv_0.\] 
Since $\sum_{j=1}^\infty \mu_l \leq a_0$, we can find $l_0$ so that for $l\geq l_0$ we have
\[
\mu_l\leq\frac{(a_0)^{{1}/{2}}}{4}\left(\frac{1}{m_1}+\frac{1}{m_2}\right)^{-{1}/{2}}.
\]
We do the same argument near $x_2$ and choose a common $l_0$. Then we pick $\rho_0\leq \min\{\rho/2,2^{-l_0}\}$ small so that for all $r\leq2\rho_0$ and $j$ sufficiently large,
\[\left|\int_{B(x_k,r)}u_j^4dv_0-m_k\right|\leq\varepsilon, \quad  k=1, 2.\]  
Then we choose  $\phi$ to be a $C^{\infty}$-function on $M$ with
\[
\phi=\begin{cases}
\frac{1}{m_1}, &\text{on $B(x_1,\rho_0)$},\\
-\frac{1}{m_2}, &\text{on $B(x_2,\rho_0)$},\\
0, &\text{$(B(x_1,2\rho_0)\cup B(x_2,2\rho_0))^c$}
\end{cases}
\]
and we extend $\phi$ ``linearly" in the rest of $M$, then
$$\int u_j^4\phi^2\geq \frac{m_1-\varepsilon}{m_1^2}+\frac{m_2-\varepsilon}{m_2^2}\geq\frac{1}{2}\left(\frac{1}{m_1}+\frac{1}{m_2}\right).$$
and
\[
\begin{split}
\left| \int \phi u_j^4 \right| &\leq \left|\int_{(B(x_1,\rho_0)\cup B(x_2,\rho_0))}\phi u_j^4\right|+ \left|\int_{\cup_{k=1}^2(B(x_k,2\rho_0)\setminus B(x_k,\rho_0))}|\phi|u_j^4\right|\\
&\leq \left(\frac{m_1+\varepsilon}{m_1}-\frac{m_2-\varepsilon}{m_2}\right)+\frac{a_0^{{1}/{2}}}{4}\left(\frac{1}{m_1}+\frac{1}{m_2}\right)^{{1}/{2}}\\
&\leq \left(\frac{1}{m_1}+\frac{1}{m_2}\right)\varepsilon+\frac{a_0^{{1}/{2}}}{4}\left(\frac{1}{m_1}+\frac{1}{m_2}\right)^{{1}/{2}}
\end{split}
\]
and 
\[\int |\nabla\phi|^2u_j^2\leq C\left(\frac{1}{\rho_0^2m_2^2}+\frac{1}{\rho_0^2m_1^2}\right)\int_Mu_j^2.\]
According to the Rayleigh-Ritz inequality, for any smooth function $\varphi$ we have
\[
\int \varphi^2 dv \le \left(\int dv\right)^{-1}\left(\int \varphi dv\right)^2 + \frac 1{\lambda_1} \int |\nabla \varphi|^2 dv.
\]
Apply this to the metric $g=u_j^2g_0$, we get
\[
\lambda_1  \leq \frac{a_0\int|\nabla\phi|^2u_j^2}{a_0\int u_j^4\phi^2-(\int u_j^4\phi)^2}
\leq \frac{C a_0  \left(\frac{1}{\rho_0^2m_2^2}+\frac{1}{\rho_0^2m_1^2}\right)\int_Mu_j^2}
{ \frac{a_0}{4}(\frac{1}{m_1}+\frac{1}{m_2})}
\to 0,
\]
where we used lemma 5.2 again in the last step.
This contradicts with the isospectrality.

\end{proof}

\begin{proof}[Proof of Proposition 5.5] $ $ 

Given any $p \ge 2$, we choose $\bar p$ to be the smallest number of the form $3 \times 2^l$ that is greater than or equal to $p$, where $l$ is a non-negative integer. 
Let $p_0$ be a constant so that lemma 5.4 holds. Note that by H\"older's inequality, $p_0$ can be taken to be any sufficiently small number, and we will take $p_0$ to be a number such that $0<p_0<\frac 14$ and such that there exists $m \in \mathbb N$ with $2^{m+1}p_0=\bar p$. 

As in [CY1], we denote $B_r=B(x_0, r)$ and $B_{r_1, r_2}=B_{r_1}\setminus \overline{B_{r_2}}$. We will fix $r$ small enough such that  $B_{4r}$ is contained in a normal coordinate patch at $x_0$. Now for $k=0,1,2,...,m$ we define $\delta_k$ and $\beta_k$ by 
\[\delta_k=\frac{r}{2^{k+3}}, \qquad 1+\beta_k=2^kp_0.\] 
Then for each $k \le m$ we have $|1+\beta_k| \le \bar p/2$. Moreover, the minimum of $|\beta_k|$ is attained at $|\beta_{m-1}|=1/4$. So we get 
$\frac{|1+\beta_k|^2}{|\beta_k|}\leq \bar p^2$ and $\frac{|1+\beta_k|^2}{|\beta_k|^2}\leq 4\bar p^2$ for all $k$.
Next we let $\rho_m=r, \sigma_m=\frac{r}{2}$, and define $\rho_k, \sigma_k (1 \le k \le m-1)$ by 
\[\rho_{k-1}=\rho_k+\delta_k, \quad \sigma_{k-1}=\sigma_k-\delta_k.\]
Note that for each $k$ we have $\sigma_k-\delta_k> r/4$, so $\mathrm{supp}\,\eta_k \subset B_{\sigma_k-\delta_k}^c \subset B_{{r}/{4}}^c$. Moreover, by definition $\rho_0+\delta_0<3r$. As a consequence, for each $k$ the triples of numbers $(\rho, \sigma, \delta)=(\rho_k, \sigma_k, \delta_k)$ satisfies $2\delta<\sigma< \rho < 3r$ and $\delta<r$. We choose a smooth cut-off function $\eta=\eta_k$  so that 
\[\eta\equiv1 \text{ on } B_{\rho,\,\sigma}, \quad \mathrm{supp}\,\eta \subset B_{\rho+\delta,\,\sigma-\delta} , \quad |\nabla\eta|\lesssim \frac{1}{\delta}  \text{\ on its support.}\] 

Now for $\beta=\beta_k$ and  $u=u_j$ with $j$, we let 
\[A_{\beta, u, \eta}=1-C_s\frac{|1+\beta|^2}{6|\beta|}\left({\int_M S_g^4u^4}\right)^{1/4} \left(\int_{\mathrm{supp}\,\eta}u^4 \right)^{1/4}.\]
Then for $j$ large enough we have $A_{\beta, u, \eta}>1/2$, since 
\begin{enumerate}
\item by our choice of $\beta=\beta_k$, we have $ {|1+\beta_k|^2}/{|\beta_k|} \leq \bar p^2$, 
\item according to Proposition 3.5, the integral $\int_M S_g^4u^4$ is bounded, 
\item we have $\mathrm{supp}\,\eta \subset B^c_{r/4}$, so for $j>j(r,p)$ large enough  the   integral $\int_{\mathrm{supp}\eta} u^4$ is sufficiently  small.
\end{enumerate}   
Applying the last step of the proof of lemma 5.1 to $\eta$ and $\omega=u^{\frac{1+\beta}{2}}$, we get
\[
\begin{split}
\left(\int \eta^4\omega^4\right)^{{1}/{2}} \leq & 2C_s\left(\frac{|1+\beta|^2}{|\beta|^2}\int|\nabla\eta|^2\omega^2+\frac{|1+\beta|^2|S_0|}{12|\beta|}\int\omega^2\eta^2\right. \\
& \left.+\frac{|1+\beta|^2}{12|\beta|}\bigg|\int S_gu^2\omega^2\eta^2\bigg|\right) + 2C_s\int w^2|\nabla\eta|^2+K_s\int\omega^2\eta^2\\
\leq & 2C_s\left(\frac{|1+\beta|^2}{|\beta|^2}+1\right)\int|\nabla\eta|^2\omega^2+\left[2C_s\frac{|1+\beta|^2|S_0|}{12|\beta|}+K_s\right]\int \omega^2\eta^2\\
&+ C_s\frac{|1+\beta|^2}{6|\beta|}\left({\int_M S_g^4u^4}\right)^{1/4} \left(\int_{\mathrm{supp}\,\eta}u^4 \right)^{1/4}\left(\int \eta^4\omega^4\right)^{{1}/{2}},
\end{split}
\]
which implies
\begin{equation}\label{5.5.1}
A_{\beta, u, \eta}\left(\int_{B_{\rho, \sigma}}\omega^4\right)^{{1}/{2}}\leq \frac{B_{\beta}}{\delta^2}\int_{B_{\rho+\delta, \sigma-\delta}}\omega^2,
\end{equation}
where 
\[B_{\beta}=C\left(\frac{|1+\beta|^2}{|\beta|^2}+\frac{|1+\beta|^2}{|\beta|}|S_0|+1\right),
\]
and $C$ is a universal constant that depends only on $C_s, K_s$. We denote 
\[\Phi(u,p,\Omega)=\left(\int_{\Omega}u^p\right)^{{1}/{p}}
.\] 
Since $A_{\beta, u, \eta}>1/2$ and $\frac{|1+\beta_k|^2}{|\beta_k|^2}\leq 16p^2$, there is a universal constant $C'(p)$ such that 
$B_\beta / A_{\beta, u, \eta}<C'(p)$. 
So the formula (\ref{5.5.1}) reads
\begin{equation}\label{5.5.2}\Phi(u,2(1+\beta),B_{\rho, \sigma})\leq (C'/\delta^2)^{1/|1+\beta|} \Phi(u,1+\beta,B_{\rho+\delta, \sigma-\delta}) \text{ if $1+\beta>0$}\end{equation} 
and
\begin{equation}\label{5.5.3}\Phi(u,1+\beta,B_{\rho+\delta, \sigma-\delta})\leq  (C'/\delta^2)^{1/|1+\beta|} \Phi(u,2(1+\beta),B_{\rho, \sigma}) \text{ if $1+\beta<0$}.\end{equation}

Now we iteratively apply (\ref{5.5.2}) to get
\[\aligned
\Phi(u_j,\bar p,B_{r, {r}/{2}})& \leq\prod_{k=0}^m (C'/\delta_k^2)^{1/|1+\beta_k|} \Phi(u_j,p_0,B_{\rho_0+\delta_0, \sigma_0-\delta_0})\\
&\leq\prod_{k=0}^m (C'/\delta_k^2)^{1/|1+\beta_k|} \Phi(u_j,p_0,B_{3r, {r}/{4}}).
\endaligned\]
Let $c=\sum_{k=0}^m \frac{2k+6}{2^k}$. Then 
\[\prod_{k=0}^m \left(\frac{C'}{\delta_k^2}\right)^{\frac 1{|1+\beta_k|}} \le (C')^{\sum \frac 1{2^kp_0}}2^{{\sum \frac{(2k+6)}{2^{k}p_0}}}r^{{-2 \sum\frac 1{2^{k}p_0}}} 
\le (C')^{\frac 2{p_0}}2^{\frac c{p_0}}r^{-4(\frac{1}{p_0}-\frac{1}{\bar p})}.\]
So we get
\begin{equation}\label{5.5.4}
\Phi(u_j,\bar p,B_{r, {r}/{2}})\leq  (C')^{\frac 2{p_0}}2^{\frac c{p_0}}r^{-4(\frac{1}{p_0}-\frac{1}{\bar p})} \Phi(u_j,p_0,B_{3r, {r}/{4}}).
\end{equation}

Similarly we let $\tilde \beta_k$ be such that $1+\widetilde\beta_k=-2^kp_0$, let $\tilde \delta_k =\frac r{2^{k+3}}$ as before, let $\tilde \rho_m=2r, \tilde \sigma_m=r$ and let $\tilde \rho_{k-1}= \tilde \rho_k+\tilde \delta_k, \tilde \sigma_{k-1}=\tilde \sigma_k-\tilde \delta_k$ for $0 \le k \le m$. One can check that all the inequalities we need among these quantities are satisfied. We then iteratively use (\ref{5.5.3}) to get 
\begin{equation}\label{5.5.5}\aligned
\Phi(u_j,-p_0,B_{3r, {r}/{4}}) &\le \Phi(u_j,1+\tilde \beta_0,B_{\tilde \rho_0+\tilde \delta_0, \tilde \sigma_0-\tilde \delta_0}) \\
&\leq\prod_{k=0}^m (C'/\tilde \delta_k^2)^{1/|1+\tilde \beta_k|}  \Phi(u_j,-\bar p,B_{2r, r}) \\
& \le   (C')^{\frac 2{p_0}}2^{\frac c{p_0}}r^{-4(\frac{1}{p_0}-\frac{1}{\bar p})} \Phi(u_j,-\bar p,B_{2r, r}).
\endaligned
\end{equation}
 
By definition, 
\[ \Phi(u_j,p_0,B_{3r, {r}/{4}})^{p_0}=\int_{B_{3r, {r}/{4}}}u_j^{p_0}\leq\int_{B_{3r}}u_j^{p_0}\]
and 
\[\Phi(u_j,-p_0,B_{3r,{r}/{4}})^{-p_0}=\int_{B_{3r, {r}/{4}}}u_j^{-p_0}\leq\int_{B_{3r}}u_j^{-p_0}.\]
Apply lemma 5.4 to $u=u_j$ with $B_{\rho}=B_{3r}$, we get
\[
\Phi(u_j,p_0,B_{3r, {r}/{4}})^{p_0}\Phi(u_j,-p_0,B_{3r, {r}/{4}})^{-p_0}\leq\int_{B_{3r}}u_j^{p_0}\int_{B_{3r}}u_j^{-p_0}\leq C(p_0)r^8,
\]
or in other words,
\begin{equation}\label{5.5.6}
\Phi(u_j,p_0,B_{3r, {r}/{4}})   \leq C'(p_0)r^{8/p_0}\Phi(u_j,-p_0,B_{3r, {r}/{4}}).
\end{equation}
So we get
\[\aligned
\Phi(u_j,{p},B_{r,{r}/{2}}) &\leq C_1\Phi(u_j,{\bar p},B_{r,{r}/{2}})r^{\frac4p-\frac 4{\bar p}} \qquad \qquad \text{(by H\"older)} \\
& \le C_2 r^{\frac {4}{\bar p}-\frac 4{p_0}} \Phi(u_j, p_0, B_{3r, r/4})r^{\frac4p-\frac 4{\bar p}}  \qquad \qquad \text{(by (\ref{5.5.4}))}\\
& \le C_3 r^{\frac {4}{\bar p}-\frac 4{p_0}} r^{\frac 8{p_0}}\Phi(u_j,-p_0,B_{3r, {r}/{4}})r^{\frac4p-\frac 4{\bar p}}  \qquad \qquad \text{(by (\ref{5.5.6}))}\\
& \le C_4 r^{\frac {4}{\bar p}-\frac 4{p_0}} r^{\frac 8{p_0}}r^{\frac {4}{\bar p}-\frac 4{p_0}} \Phi(u_j,-\bar p,B_{2r, r})r^{\frac4p-\frac 4{\bar p}}  \qquad \quad \text{(by (\ref{5.5.5}))}\\
& = C_4 r^{\frac 4p + \frac 4{\bar p}} \Phi(u_j,-\bar p,B_{2r, r}) \\
& \leq  C_5 r^{\frac 8p} \Phi(u_j,-p,B_{2r, r}) \qquad \qquad \text{(by H\"older)}\\
& \leq  C_6 r^{\frac 8p}r^{-\frac 8p}  \Phi(u_j,p,B_{2r,r}) \qquad\qquad\text{(by Cauchy-Schwartz)}\\
& = C_6 \Phi(u_j,p,B_{2r,r}).
\endaligned\]
where $C_6$ is a constant that depends only on $p_0$. 
This completes the proof. 
\end{proof}

\end{document}